\numberwithin{equation}{section}
\newtheorem{thm}{Theorem}[section]
\newtheorem{lem}[thm]{Lemma}
\newtheorem{prop}[thm]{Proposition}
\newtheorem{cor}[thm]{Corollary}
\newtheorem{rmk}[thm]{Remark}
\newcommand\Var{\mathrm{Var}}
\newcommand\EE{\mathbb{E}\,}
\newcommand\PP{\mathbb{P}}
\newcommand{\E}{{\mathbb E}}
\newcommand{\N}{{\mathbb N}}
\newcommand{\Nsquare}{\mathcal{N}^{\, \square}}
\newcommand{\Esquare}{\mathcal{E}^{\, \square}}
\newcommand{\Fsquare}{\mathcal{F}^{\, \square}}
\newcommand{\Gsquare}{\mathcal{G}^{\, \square}}
\newcommand{\Vsquare}{\mathcal{V}^{\, \square}}
\newcommand{\Ncircle}{\mathcal{N}^{\, \bullet}}
\newcommand{\Ecircle}{\mathcal{E}^{\, \bullet}}
\newcommand{\Vcircle}{\mathcal{V}^{\, \bullet}}
\newcommand{\Wcircle}{\mathcal{W}^{\, \bullet}}
\newcommand{\Ucircle}{\mathcal{U}^{\, \bullet}}	
\newcommand{\Addresses}{{
  \footnotesize
  \bigskip
  \footnotesize

	\textsc{School of Mathematics, Tel Aviv University, Tel Aviv 6997801, Israel}\par\nopagebreak
	\textit{E-mail address:} \texttt{benatar@mail.tau.ac.il}

	\medskip

	\textsc{School of Mathematics, Tel Aviv University, Tel Aviv 6997801, Israel}\par\nopagebreak
	\textit{E-mail address:} \texttt{alonish@tauex.tau.ac.il}

	\medskip

	\textsc{Department of Mathematics and Statistics, Queen's University, Kingston, Ontario, K7L 3N6, Canada}\par\nopagebreak
	\textit{E-mail address:} \texttt{brad.rodgers@queensu.ca}

}}
\begin{document}

\begin{abstract}
For $X(n)$ a Rademacher or Steinhaus random multiplicative function, we consider the random polynomials
$$
P_N(\theta) = \frac1{\sqrt{N}} \sum_{n\leq N} X(n) e(n\theta),
$$
and show that the $2k$-th moments on the unit circle
$$
\int_0^1 \big| P_N(\theta) \big|^{2k}\, d\theta
$$
tend to Gaussian moments in the sense of mean-square convergence, uniformly for $k \ll (\log N / \log \log N)^{1/3}$, but that in contrast to the case of i.i.d. coefficients, this behavior does not persist for $k$ much larger. We use these estimates to (i) give a proof of an almost sure Salem-Zygmund type central limit theorem for $P_N(\theta)$, previously obtained in unpublished work of Harper by different methods, and (ii) show that asymptotically almost surely
$$
(\log N)^{1/6 - \varepsilon} \ll \max_\theta |P_N(\theta)| \ll \exp((\log N)^{1/2+\varepsilon}),
$$
for all $\varepsilon > 0$. 
\end{abstract}

\title[Polynomials with random multiplicative coefficients]{Moments of polynomials with random multiplicative coefficients}
\author{Jacques Benatar, Alon Nishry, Brad Rodgers}

\maketitle

\section{Introduction}

\subsection{Background}

In the seminal paper \cite{SaZy} Salem and Zygmund studied the distribution of a random trigonometric polynomial 
$$
Q_N(\theta) = \frac{1}{\sqrt{N}} \sum_{n=1}^N r_n e(n\theta).
$$
Here the coefficients $r_n$ are taken to be independent random variables; for concreteness take $r_n$ independent and identically distributed taking the values $+1$ and $-1$ with probability $1/2$ for all $n$. Among other results, Salem and Zygmund demonstrated that %(almost surely ??)
\begin{enumerate}[(i)]
\item if $\theta$ is drawn randomly at uniform from the interval $(0,1)$, then $Q_N(\theta)$ converges in distribution (and in moments) to a standard complex Gaussian, and
\item almost surely, we have for sufficiently large $N$,
$$
\max_{\theta}|Q_N(\theta)| \asymp \sqrt{\log N}.
$$ 
\end{enumerate}
In fact Halasz \cite{Ha} showed that the estimate in (ii) can be replaced with an asymptotic formula: almost surely $\max_\theta |Q_n(\theta)| \sim \sqrt{\log N}$. These results have stimulated a great deal of subsequent work, for instance \cite{angst_poly,borwein_lockhart,kahane,weber00,weber06}.

Our purpose in this note is to investigate what happens when independence between coefficients is replaced by a type of weak but non-local dependence. Instead of the i.i.d. sequence of Salem and Zygmund, we consider coefficients given by a random multiplicative function. In this case it is not clear that we should expect the same asymptotic behavior as found by Salem and Zygmund. 

Recall that a function $f: \mathbb{N} \rightarrow \mathbb{C}$ is said to be \emph{multiplicative} if it satisfies $f(mn) = f(m)f(n)$ for all coprime $m,n \in \mathbb{N}$; if this holds for all $m,n$ the function is said to be \emph{completely multiplicative}. We will slightly abuse terminology and say that a function defined only on an interval $[1,N]$ is multiplicative or completely multiplicative if this relation holds for all $m,n \in [1,N]$ with $mn \in [1,N]$.

\subsection{Polynomials with non-random multiplicative coefficients}

The Fekete polynomials are perhaps the most famous example of a trigonometric polynomial for which the coefficients are completely multiplicative and take the value $\pm 1$. For primes $p$ they are the $p-1$ degree polynomial defined by
$$
F_p(\theta) = \frac{1}{\sqrt{p-1}}\sum_{n=1}^{p-1} \left(\frac{n}{p}\right) e(n\theta),
$$
where $\left(\frac{\cdot}{p}\right)$ is the Legendre symbol. Recently, building upon work in \cite{HoJe,BoCh}, G\"unther and Schmidt \cite{GuSc} established the $p\rightarrow\infty$ limiting value of moments 
$$
\int_0^1 \left| F_p(\theta) \right|^{2k}\,d\theta,
$$
for all fixed integers $k$. The moments are \emph{not} Gaussian. Earlier, Montgomery \cite{Mo} has shown that $$\log \log p \ll \max_\theta |F_p(\theta)| \ll \log p,$$ and conjectured that 
$$
\max_\theta |F_p(\theta)| \asymp \log \log p.
$$

By contrast, if $\lambda(n)$ is the Liouville function, then the polynomials defined by 
$$
L_N(\theta) = \frac{1}{\sqrt{N}} \sum_{n=1}^N \lambda(n) e(n\theta),
$$
arise naturally in the circle method and other parts of number theory (though it is perhaps more common to have the M\"obius function in place of the Liouville function, see for instance \cite{davenport}). These polynomials also have coefficients which are completely multiplicative and take the value $\pm 1$. Numerical evidence in this case suggests that $L_N(\theta)$ does indeed tend to a Gaussian distribution as $N\rightarrow\infty$ (so that in particular the polynomial's $L^1$-norm, investigated in \cite{balog_perelli,balog_rusza}, will be of constant order) and $\max_\theta |L_N(\theta)| \asymp \sqrt{\log N}$, though it would seem that both of these claims lie far outside the reach of what can now be proven.

\subsection{Main results}

The known and conjectured results for polynomials with non-random multiplicative coefficients that we have just described leave us wondering what behavior we should expect to be typical for random coefficients.

In order to investigate this, we consider random multiplicative functions of the following sort: if for each prime $p$, $X(p)$ is an independent random variable taking the values $+1$ and $-1$ with probability $1/2$ each, and if for composite $n = p_1^{e_1}...p_k^{e_k}$ we set $X(n) = X(p_1)^{e_1}\cdots X(p_k)^{e_k}$, we say that $\{X(n)\}_{n\geq 1}$ is a \emph{Rademacher random multiplicative function} (Some authors restrict $n$ to be squarefree, setting $X(n) = 0$ otherwise, but we will not follow this convention here). If instead $X(p)$ is uniformly distributed on the complex unit circle, and $X(n)$ is generated from $X(p)$ as before, we say that $\{X(n)\}_{n\geq 1}$ is a \emph{Steinhaus random multiplicative function}. 

We define the random trigonometric polynomial
$$
P_N(\theta) = \frac{1}{\sqrt{N}} \sum_{n=1}^N X(n) e(n\theta),
$$
and the normalized unit-circle-moments,
\begin{equation} \label{eq:P_N_moments_normalized}
\mathfrak{m}_N^{(j,k)} = \int_0^1 \big( P_N(\theta) \big)^j \big( \overline{P_N(\theta)} \big)^k \, d\theta, \quad\quad  \mathfrak{m}_N^{(k)} = \int_0^1 \big| P_N(\theta) \big|^{2k}\, d\theta.
\end{equation}
Note that $\mathfrak{m}_N^{(j,k)}$ and $\mathfrak{m}_N^{(k)}$ are random variables, determined by the random function $X(n)$. From the definition we have $\mathfrak{m}_N^{(j,k)} = \overline{\mathfrak{m}_N^{(k,j)}}$.

Random multiplicative functions have been extensively studied, starting with Wintner \cite{wintner}. Some recent works are \cite{heap,harper15,sarkozy,najnudel,harper20,bondarenko,aymone2020}.

Our main result shows that $\mathfrak{m}_N^{(j,k)}$ tend to Gaussian moments for relatively large $j$ and $k$, at the level of the mean and mean-square. Throughout this paper we will use the abbreviation $\log_2 N = \log \log N$.

\begin{thm}
\label{thm:main_moments_Gaussian}
For $X(n)$ a Rademacher or Steinhaus random multiplicative function, we have
\begin{equation}\label{eq:moment_mean_square_Gaussian}
\E\, \left| \mathfrak{m}_N^{(j,k)} - k!\, \mathbf{1}_{jk} \right|^2 \ll \frac{1}{N^{1/15k}}
\end{equation}
uniformly for $1 \leq j \leq k \leq A\, (\log N / \log_2 N)^{1/3},$ where $A > 0$ is a small absolute constant.
\end{thm}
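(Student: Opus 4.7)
My plan is to expand $\mathfrak{m}_N^{(j,k)}$ as a weighted sum over integer tuples with coupled additive and multiplicative constraints, and to control its deviation from $k!\mathbf{1}_{j=k}$ by an off-diagonal analysis at the level of both the mean and the second moment. Opening the integral gives
\begin{equation*}
\mathfrak{m}_N^{(j,k)} = \frac{1}{N^{(j+k)/2}} \sum_{\substack{\vec n \in [1,N]^j,\ \vec m \in [1,N]^k \\ n_1+\cdots+n_j = m_1+\cdots+m_k}} X(n_1)\cdots X(n_j)\,\overline{X(m_1)\cdots X(m_k)},
\end{equation*}
and because the prime values $X(p)$ are independent with $X$ completely multiplicative, taking expectations kills every term except those for which $\prod n_i = \prod m_i$ (Steinhaus) or $\prod n_i \cdot \prod m_i$ is a perfect square (Rademacher, which reduces to essentially the same combinatorial problem after separating off square factors of the $n_i, m_i$).

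To evaluate the mean when $j=k$, I would isolate the \emph{permutation diagonal} — the tuples with $\vec m$ a rearrangement of $\vec n$ — which satisfies both the additive and multiplicative constraints automatically and contributes $k! + O_k(1/N)$ after accounting for coincidences among the $n_i$. The remaining non-permutation tuples satisfy the multiplicative constraint non-trivially; for each fixed $\vec n$ the number of such $\vec m$ is bounded by an iterated-divisor estimate of the form $\tau_k(\prod n_i)$, and the additive constraint then supplies a further quantitative saving, targeted at the rate $N^{-c/k}$. When $j\neq k$ there is no permutation diagonal and the entire mean is handled by this off-diagonal argument. For the variance, expanding $\E|\mathfrak{m}_N^{(j,k)}|^2$ yields a $(2j+2k)$-fold sum with two additive constraints and a single combined multiplicative constraint on all $2j+2k$ variables; the disconnected matchings cancel against $|\E\mathfrak{m}_N^{(j,k)}|^2$, and the surviving connected part is bounded by the same off-diagonal technology, now applied to a doubled tuple.

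The principal obstacle is obtaining the off-diagonal bound at the required rate $N^{-1/(15k)}$. Without the additive constraint the Steinhaus multiplicative energy $\#\{(\vec n, \vec m)\in[1,N]^{2k} : \prod n_i = \prod m_i\}$ is known to grow like $N^k (\log N)^{(k-1)^2}$, so the additive constraint must beat not only a single factor of $N$ but also this logarithmic blow-up together with the combinatorial factor $k!^2$ coming from the labelling of $2k$ variables. Achieving this will require combining iterated-divisor bounds with a sharp additive input (either a Plünnecke/sumset-type inequality or a careful manipulation on the Dirichlet-series / Fourier side), and the resulting delicate balance between these losses and the analytic gain is what forces the restriction $k \ll (\log N/\log_2 N)^{1/3}$. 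As the abstract already indicates, Gaussianity genuinely breaks down past this threshold, so the restriction should be essential rather than a mere artifact of the method.
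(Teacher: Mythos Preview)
Your overall architecture matches the paper's: open the integral, isolate the permutation diagonal contributing $k!\,\mathbf{1}_{jk}$, and reduce both the mean and the variance to off-diagonal counts for tuples satisfying simultaneous additive and multiplicative constraints (the doubling for the variance is also handled this way in the paper).

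The genuine gap is in how you extract the power saving $N^{-c/k}$ from the additive constraint. Your proposal --- bound the multiplicatively admissible $\vec m$ for fixed $\vec n$ by $\tau_k(\prod n_i)$, and then assert that the additive constraint ``supplies a further quantitative saving'' --- does not work as stated: once the set of admissible $\vec m$ has been collapsed to a scalar divisor count, there is no structure left for the additive equation to act on, and neither a Pl\"unnecke--Ruzsa inequality nor a generic Fourier/Dirichlet manoeuvre is known to produce the required saving here. The paper's key device is a factorization of Vaughan--Wooley type: every solution of $m_1\cdots m_j\, n_1\cdots n_k = \square$ (or $\prod m_r = \prod n_s$ in the Steinhaus case) is parametrized by an upper-triangular array $(b_{rs})_{1\le r\le s\le \ell}$, $\ell=j+k$, whose row-times-column products recover the $m_r,n_s$. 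In these array coordinates the additive equation $\sum m_r = \sum n_s$ becomes a polynomial identity in the $b_{rs}$; for a genuinely off-diagonal solution this identity is nontrivial, so the \emph{maximal} entry $b_{uv}$ is determined by the remaining entries, while the size constraints $b_r^\ast\le N$ force the product of the squares of the free entries to be at most $N^{\ell - 1/(\ell+1)}$. A divisor-sum bound over the free entries then yields a saving of $N^{-1/O(k)}$ at the cost of a factor $(\log N)^{O(k^2)}$, and it is the balance between these two that produces the range $k\ll(\log N/\log_2 N)^{1/3}$. Incidentally, the paper itself remarks that this exponent $1/3$ is probably an artifact of the method rather than a true threshold: the demonstrated breakdown of Gaussianity occurs only at $k\asymp(\log N/\log_2 N)^{1/2}$.
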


As a consequence of Theorem \ref{thm:main_moments_Gaussian}, we will prove the following central limit theorem.

\begin{thm}
\label{thm:Gaussian_RMF}
For $X(n)$ drawn randomly as a Rademacher or Steinhaus random multiplicative function, define random trigonometric polynomials $P_N$ by
$$
P_N(\theta) = \frac{1}{\sqrt{N}}\sum_{n=1}^N X(n) e(n\theta).
$$
Then almost surely,
\begin{equation}
\label{eq:as_CLT}
\lim_{N\rightarrow\infty} \mathrm{meas}\Big\{ \theta \in [0,1]:\, P_N(\theta) \in E\Big\} = \frac{1}{\pi} \int_{E} e^{-x^2-y^2}\, dxdy,
\end{equation}
for any rectangle $E \subset \mathbb{C}$.
\end{thm}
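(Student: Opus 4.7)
The plan is to deduce Theorem \ref{thm:Gaussian_RMF} from Theorem \ref{thm:main_moments_Gaussian} by combining Borel--Cantelli with the classical method of moments. The underlying principle is that the standard complex Gaussian is determined by its moments $k!\, \mathbf{1}_{j=k}$, so once we verify that each of the random quantities $\mathfrak{m}_N^{(j,k)}$ converges almost surely to its target value, weak convergence of the pushforward $\nu_N$ of Lebesgue measure on $[0,1]$ under $\theta \mapsto P_N(\theta)$ to the complex Gaussian will follow automatically; the convergence \eqref{eq:as_CLT} at any rectangle then comes from the Portmanteau theorem, since the Gaussian puts no mass on the boundary of any rectangle.

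The first step is to apply Chebyshev's inequality to Theorem \ref{thm:main_moments_Gaussian} along a sparse subsequence. With $N_m := \lfloor \exp(\sqrt{m}) \rfloor$, for each fixed pair $(j,k)$ of non-negative integers one obtains
\[
\PP\Bigl( \bigl| \mathfrak{m}_{N_m}^{(j,k)} - k!\, \mathbf{1}_{j=k} \bigr| > \varepsilon \Bigr) \ll_{\varepsilon} \exp\bigl(-\sqrt{m}/(15k)\bigr),
\]
which is summable in $m$ for every fixed $k$. Borel--Cantelli, intersected over the countable family $(j,k) \in \Z_{\geq 0}^2$, then produces a single almost-sure event on which $\mathfrak{m}_{N_m}^{(j,k)} \to k!\, \mathbf{1}_{j=k}$ for \emph{every} pair $(j,k)$ simultaneously. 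On this event, the method of moments (with tightness supplied by Parseval's identity $\int_0^1 |P_{N_m}|^2\, d\theta = 1$) yields $\nu_{N_m} \Rightarrow \mathcal{N}_\C(0,1)$ as $m \to \infty$.

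The second step is to pass from the subsequence back to every $N$. A direct Parseval computation, splitting $P_N - P_{N_m}$ into a renormalization of the common initial segment and a tail sum over $N_m < n \leq N$, gives
\[
\bigl\| P_N - P_{N_m} \bigr\|_{L^2([0,1])}^2 \;\leq\; \frac{2(N - N_m)}{N} \;\ll\; \frac{N_{m+1}-N_m}{N_m} \;\ll\; \frac{1}{\sqrt{m}},
\]
uniformly for $N \in [N_m, N_{m+1}]$, since for our choice of subsequence the consecutive ratios tend to $1$. Chebyshev then ensures that $\mathrm{meas}\{\theta : |P_N(\theta) - P_{N_m}(\theta)| > \delta\} \to 0$ uniformly in $N \in [N_m, N_{m+1}]$, so that for any rectangle $E$,
\[
\bigl|\nu_N(E) - \nu_{N_m}(E)\bigr| \leq \mathrm{meas}\{|P_N - P_{N_m}|>\delta\} + \nu_{N_m}\bigl(E_\delta \setminus E_{-\delta}\bigr),
\]
where $E_{\pm\delta}$ denotes the $\delta$-fattening, respectively $\delta$-shrinking, of $E$. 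Letting first $m \to \infty$ and then $\delta \to 0$ finishes the deduction.

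The main obstacle is balancing the choice of $\{N_m\}$ against two competing constraints: the rate $N^{-1/(15k)}$ in Theorem \ref{thm:main_moments_Gaussian} deteriorates in $k$, so no geometric subsequence $N_m \asymp r^m$ can be used uniformly (it would force $N_{m+1}/N_m \geq r > 1$ and kill the $L^2$ interpolation), while any polynomial subsequence $N_m \asymp m^C$ only captures $k \lesssim C/30$. The sub-exponential scale $\exp(\sqrt{m})$ threads this needle: it grows slowly enough that $\|P_N - P_{N_m}\|_{L^2} \to 0$ along the interpolation, yet quickly enough that $\sum_m N_m^{-1/(15k)}$ converges for every fixed $k$.
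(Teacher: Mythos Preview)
Your argument is correct and takes a genuinely different route from the paper's own proof. The paper fixes $(j,k)$, chooses a polynomial subsequence $N=M^{\lambda+1}$ with $\lambda$ depending on $k$, and then interpolates the \emph{moments themselves} to all $N$ by writing $\sqrt{N}P_N = F + f$ and controlling $\int_0^1|f|^p\,d\theta$ via an almost-sure $L^p$ bound on short-interval sums (Lemma~\ref{lem:sqrt_cancel}, which in turn rests on the short-interval point count of Lemma~\ref{lem:shortintervals}). You instead pick a single sub-exponential subsequence $N_m=\lfloor\exp\sqrt{m}\rfloor$ that simultaneously makes $\sum_m N_m^{-1/15k}<\infty$ for every $k$ and keeps $N_{m+1}/N_m\to 1$; you then run the moment method only along this subsequence and interpolate the \emph{weak convergence} using the deterministic Parseval identity $\|P_N-P_{N_m}\|_{L^2}^2 = 2\bigl(1-\sqrt{N_m/N}\bigr)\le 2(N-N_m)/N$, together with a Portmanteau argument on the $\delta$-collar of $\partial E$.

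The trade-off is clean: your approach is strictly more elementary, bypassing Lemmas~\ref{lem:shortintervals} and~\ref{lem:sqrt_cancel} entirely and using nothing beyond Theorem~\ref{thm:main_moments_Gaussian} and orthogonality. The paper's approach, on the other hand, proves the stronger intermediate statement that almost surely $\mathfrak{m}_N^{(j,k)}\to k!\,\mathbf{1}_{jk}$ along the \emph{full} sequence of integers $N$, not merely weak convergence of $\nu_N$; this is of some independent interest but is not needed for Theorem~\ref{thm:Gaussian_RMF} as stated. Two cosmetic points: in your sandwich bound for $|\nu_N(E)-\nu_{N_m}(E)|$ a harmless factor of $2$ is missing in front of $\mathrm{meas}\{|P_N-P_{N_m}|>\delta\}$, and the cases $j=0$ or $k=0$ (not covered by Theorem~\ref{thm:main_moments_Gaussian}) are trivial since then $\mathfrak{m}_N^{(j,k)}=\mathbf{1}_{j=k}$ identically.
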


Here ``almost surely" means upon drawing $X(n)$, this event occurs with probability $1$. The measure in \eqref{eq:as_CLT} is the Lebesgue measure on $[0,1]$, with $\mathrm{meas}\{[0,1]\} = 1$.

Theorem \ref{thm:Gaussian_RMF} was first obtained by A. Harper \cite{harper_com} using martingale methods, though this work was not published. The approach we give here is via moments.

Furthermore, from the information on high moments in Theorems \ref{thm:main_moments_Gaussian} and a few additional estimates we obtain information about the sup-norm of $P_N$. Recall that a sequence of events $E_n$ is said to occur \emph{asymptotically almost surely} if $\mathbb{P}(E_n) = 1-o(1)$ as $n\rightarrow\infty$.

\begin{thm}\label{thm:supnorm}
For $X(n)$ a Rademacher or Steinhaus random multiplicative function, we have
$$
\Big(\frac{\log N}{\log_2 N}\Big)^{1/6} \,\leq \, \max_\theta |P_N(\theta)| \,\leq \, \exp\big( 3\sqrt{\log N \log_2 N}\big),
$$
asymptotically almost surely as $N\rightarrow\infty$.
\end{thm}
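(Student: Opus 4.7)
The plan is to prove the two inequalities separately, relying on Theorem~\ref{thm:main_moments_Gaussian} for the lower bound and on a pointwise moment estimate combined with a Bernstein-type discretization for the upper bound.

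For the lower bound, I would apply Theorem~\ref{thm:main_moments_Gaussian} with $k = \lfloor A(\log N/\log_2 N)^{1/3}\rfloor$, the largest value permitted. Since $[0,1]$ is a probability space one has $\|P_N\|_{L^\infty} \ge \|P_N\|_{L^{2k}} = (\mathfrak{m}_N^{(k)})^{1/(2k)}$. Chebyshev's inequality applied to Theorem~\ref{thm:main_moments_Gaussian} gives
\[
\mathbb{P}\big(\mathfrak{m}_N^{(k)} \le k!/2\big) \le \frac{4}{(k!)^2}\,\mathbb{E}\big|\mathfrak{m}_N^{(k)} - k!\big|^2 \ll \frac{1}{(k!)^2\, N^{1/(15k)}} = o(1),
\]
so asymptotically almost surely $\mathfrak{m}_N^{(k)} \ge k!/2$. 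By Stirling, $(k!/2)^{1/(2k)} \gg \sqrt{k} \gg (\log N/\log_2 N)^{1/6}$, which is the required lower bound.

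For the upper bound the key ingredient is a pointwise moment estimate $\mathbb{E}|P_N(\theta)|^{2k} \le (\log N)^{O(k^2)}$, uniformly in $\theta$ and $k$ in the relevant range. In the Steinhaus case, the orthogonality $\mathbb{E}X(n)\overline{X(m)} = \mathbf{1}_{n=m}$ reduces the moment to a divisor count:
\[
\mathbb{E}|P_N(\theta)|^{2k} \le \frac{1}{N^k} \sum_{n_1,\dots,n_k \le N} d_k\big(\textstyle\prod_i n_i;\,N\big);
\]
the submultiplicativity $d_k(\prod n_i) \le \prod d_k(n_i)$ combined with $\sum_{n \le N} d_k(n) \ll N(\log N)^{k-1}/(k-1)!$ and Stirling's formula yields $\mathbb{E}|P_N(\theta)|^{2k} \le (C\log N/k)^{k^2}$. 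In the Rademacher case, writing $l_i = y_i z_i^2$ with $y_i$ squarefree and summing over primes via the Euler product associated with the squarefree-part structure should give a comparable bound $\mathbb{E}|P_N(\theta)|^{2k} \le e^{O(k^2 \log_2 N)}$. Bernstein's inequality for trigonometric polynomials of degree $N$ then gives $\|P_N\|_{L^\infty} \le 2\max_{0 \le j < M} |P_N(j/M)|$ for $M = O(N)$, and combining Markov's inequality with a union bound yields
\[
\mathbb{P}\big(\|P_N\|_{L^\infty} > T\big) \ll \frac{N\cdot (\log N)^{O(k^2)}}{(T/2)^{2k}}.
\]
Choosing $k \asymp \sqrt{\log N/\log_2 N}$ to balance the two competing factors produces $\|P_N\|_{L^\infty} \le \exp\big(O(\sqrt{\log N \log_2 N})\big)$ asymptotically almost surely, which sits comfortably inside $\exp(3\sqrt{\log N \log_2 N})$.

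The main obstacle I anticipate is the Rademacher pointwise moment bound: the constraint $\prod_i n_i\prod_j m_j = \square$ admits strictly more configurations than the Steinhaus analogue $\prod n_i = \prod m_j$, and a naive reduction to $\sum_{b \le N^k} d_{2k}(b)^2$-type sums incurs an undesirable $e^{O(k^4)}$ factor from the leading Selberg--Delange constant, which would destroy the final optimization. A direct sieve over the squarefree-part structure, controlling the associated Euler product prime by prime, should keep the estimate in the $(\log N)^{O(k^2)}$ form. A secondary (routine) issue is the uniformity in $k$ of the underlying divisor-sum estimates throughout the range $k \lesssim \sqrt{\log N}$, which is within reach of standard Shiu-type bounds.
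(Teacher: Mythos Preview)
Your proposal is correct and matches the paper's argument closely. The lower bound is identical. For the upper bound the paper uses the integrated moment $\E\int_0^1|P_N|^{2k}\,d\theta$ together with the Bernstein-based inequality $\max_\theta|P_N(\theta)|\ll\big(N\int_0^1|P_N|^{2k}\,d\theta\big)^{1/2k}$, rather than a pointwise moment plus a union bound over a net; since bounding the Rademacher pointwise moment by absolute value discards the phase $e((\sum m_r-\sum n_s)\theta)$ and yields the same lattice count, the two routes coincide.

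Your anticipated Rademacher obstacle is not real. The paper bounds
\[
\frac{1}{N^k}\sum_{\substack{m_r,n_s\le N\\ m_1\cdots m_k n_1\cdots n_k=\square}}1
\;\le\;\frac{1}{N^k}\sum_{n\le N^k}\tau_{2k}(n^2)
\]
and then applies the elementary chain $\tau_{2k}(n^2)\le\tau_{2k}(n)^2\le\tau_{4k^2}(n)$ together with the constant-free bound $\sum_{n\le x}\tau_\ell(n)\le x(2\log x)^{\ell-1}$. This gives $(2k\log N)^{4k^2-1}$, so the exponent stays $O(k^2)$; no Selberg--Delange constant enters and no $e^{O(k^4)}$ loss occurs. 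The optimization $k\asymp\sqrt{\log N/\log_2 N}$ then proceeds exactly as you outline.
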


This result allows us to see that $\max_\theta|P_N(\theta)|$ will typically be at least as large as a power of $\log N$, thus distinguishing conjectural behavior of Fekete polynomials from typical behavior of random multiplicative functions.

A well-known conjecture of S\'{a}rk\"{o}zy \cite[Conj. 49]{sarkozy} is that if $f(n): \N \to \{\pm1\}$ is \emph{any} multiplicative function, then
$$
\limsup_{N\to\infty} \frac1{\sqrt{N}} \max_{\theta} \Big| \sum_{n \le N} f(n) e(n\theta) \Big| = +\infty.
$$
The lower bound in Theorem \ref{thm:supnorm} shows that for a typical completely multiplicative function a rather stronger statement holds.

\subsection{Long tails}

In fact it seems likely that asymptotically almost surely,
\begin{equation}\label{eq:supnorm_conj}
\max_\theta |P_N(\theta)| \sim (\log N)^{1/2},
\end{equation}
and it may even be that this is true almost surely as for independent coefficients. Nonetheless we expect the random variables $\max_\theta |P_N(\theta)|$ to have rather longer tails than in the independent case, presenting a serious challenge in proving \eqref{eq:supnorm_conj} via a moment or exponential moment method.

In fact, such long tails are already present in the computation of high moments. $\E\, \mathfrak{m}_N^{(k)}$ is much larger than a Gaussian moment for $k$ larger than $(\log N)^{1/2+\epsilon}$, and the standard deviation of $\mathfrak{m}_N^{(k)}$ will be at least as large as the expected value, indicating that there is no concentration of this random variable in the mean-square sense.

\begin{thm}
\label{thm:main_moments_larger}
For $X(n)$ a Rademacher or Steinhaus random multiplicative function, we have for positive integers $k \leq N^{1/4}$,
\begin{equation}\label{eq:moment_mean_larger}
\E\, \mathfrak{m}_N^{(k)} = e^{O(\log N)} \E\, \Big| \frac{1}{\sqrt{N}} \sum_{n\leq N} X(n)\Big|^{2k}.
\end{equation}

As a consequence for any $\varepsilon > 0$, there is a constant $B_\varepsilon$ such that if $N$ is sufficiently large (depending on $\varepsilon$) and $B_\varepsilon (\log N / \log_2 N)^{1/2} \leq k \leq N^{1/4}$,
\begin{equation}\label{eq:moment_mean_lowerbound}
\E\, \mathfrak{m}_N^{(k)} \gg \Big(\E\, \Big| \frac{1}{\sqrt{N}} \sum_{n\leq N} X(n)\Big|^{2k}\Big)^{1-\varepsilon}.
\end{equation}

Moreover, there is an absolute constant $B$ such that for $B (\log N / \log_2 N)^{1/2} \leq k \leq N^{1/4}$,
\begin{equation}\label{eq:moment_variance_larger}
\Var(\mathfrak{m}_N^{(k)}) \gg (\E\, \mathfrak{m}_N^{(k)})^2.
\end{equation}
\end{thm}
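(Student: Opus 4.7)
The plan is to expand $\mathfrak{m}_N^{(k)}$ combinatorially via Parseval and then compare it to $|P_N(0)|^{2k}$ through a pigeonhole argument on the additive constraint $\sum n_i = \sum m_j$. Opening $|P_N(\theta)|^{2k}$ and integrating yields
$$
\mathfrak{m}_N^{(k)} = \frac{1}{N^k} \sum_{\substack{\mathbf{n},\mathbf{m}\in[N]^k \\ n_1+\cdots+n_k = m_1+\cdots+m_k}} X(n_1)\cdots X(n_k)\overline{X(m_1)\cdots X(m_k)}.
$$
Upon taking expectation, the surviving terms satisfy a multiplicative matching condition: $\prod n_i = \prod m_j$ in the Steinhaus case, and $\prod n_i m_j$ a perfect square in the Rademacher case. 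Writing $\#T$ for the number of tuples $(\mathbf{n},\mathbf{m}) \in [N]^{2k}$ satisfying only the matching condition and $\#S$ for those satisfying matching together with $\sum n_i = \sum m_j$, one has $\E\mathfrak{m}_N^{(k)} = \#S/N^k$ and $\E|P_N(0)|^{2k} = \#T/N^k$.

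For \eqref{eq:moment_mean_larger} the inequality $\#S \le \#T$ is trivial. For the reverse direction I would group tuples by multiplicative class (indexed by $j = \prod n_i$ for Steinhaus, or the squarefree part of $\prod n_i$ for Rademacher); writing $u_j = \#\{\mathbf{n}\in[N]^k : \text{class }j\}$ and $u_{j,s}$ for the same quantity restricted to $\sum n_i = s$, one has $\#T = \sum_j u_j^2$ and $\#S = \sum_{j,s} u_{j,s}^2$. Since $s$ takes at most $kN$ values, Cauchy--Schwarz gives $\sum_s u_{j,s}^2 \ge u_j^2/(kN)$, hence $\#S \ge \#T/(kN)$. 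Using $k \le N^{1/4}$, the factor $kN = e^{O(\log N)}$, and \eqref{eq:moment_mean_larger} follows.

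The bound \eqref{eq:moment_mean_lowerbound} then reduces to showing $\E|P_N(0)|^{2k} \ge e^{C(\varepsilon)\log N}$ in the stated range. This I would deduce from classical lower bounds on high moments of sums of random multiplicative functions: via Conrey--Gamburd / Selberg--Delange-type estimates, or by directly isolating the contribution to $\sum_j u_j^2$ from highly composite $j$, one obtains $\E|P_N(0)|^{2k} \gg (\log N)^{(1-o(1))k^2}$ uniformly up to the relevant range. Choosing $B_\varepsilon \sim 1/\sqrt{\varepsilon}$ then ensures $(\log N)^{(1-o(1))k^2} \ge e^{C(\varepsilon)\log N}$ for $k \ge B_\varepsilon (\log N/\log_2 N)^{1/2}$, yielding \eqref{eq:moment_mean_lowerbound}.

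For the variance bound \eqref{eq:moment_variance_larger} I would apply the same strategy to $\E(\mathfrak{m}_N^{(k)})^2$: expanding this as a count of $4k$-tuples satisfying the joint multiplicative match $\prod n_i \prod n_i' = \prod m_j \prod m_j'$ together with the two additive constraints $\sum n_i = \sum m_j$ and $\sum n_i' = \sum m_j'$, then pigeonholing on the $\le (kN)^2$ possible sum-pairs, I obtain $\E(\mathfrak{m}_N^{(k)})^2 \ge \E|P_N(0)|^{4k}/(kN)^2$. Combined with the trivial $(\E\mathfrak{m}_N^{(k)})^2 \le (\E|P_N(0)|^{2k})^2$, \eqref{eq:moment_variance_larger} reduces to
$$
\frac{\E|P_N(0)|^{4k}}{(\E|P_N(0)|^{2k})^2} \gg e^{O(\log N)},
$$
which follows from the same moment estimates (giving a ratio of order $(\log N)^{2k^2 + O(k)}$) once $k \ge B(\log N/\log_2 N)^{1/2}$ for a large enough absolute constant $B$. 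The main technical obstacle throughout will be establishing the large-$k$ lower bound on $\E|P_N(0)|^{2k}$ uniformly in the moderately growing $k$ regime, where the classical fixed-$k$ asymptotics of Conrey--Gamburd do not directly apply; a divisor-function count or Dirichlet-series argument tailored to $k$ growing with $N$ will be required.
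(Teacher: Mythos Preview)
Your Cauchy--Schwarz argument for \eqref{eq:moment_mean_larger} is correct and is a genuinely cleaner alternative to the paper's proof. The paper obtains the lower bound by integrating $\E|P_N(\theta)|^{2k}$ over $\theta\in[0,N^{-3/2}]$ and Taylor-expanding the exponential, which gives $\E\,\mathfrak{m}_N^{(k)}\gg N^{-3/2}\,\E|P_N(0)|^{2k}$; your pigeonhole on the sum $s$ gives the sharper factor $(kN)^{-1}$ directly, and the same idea applied to the pair $(s,s')$ reproduces Lemma~4.4 with $(kN)^{-2}$ in place of $N^{-3}$. Both approaches feed into the identical reduction: one needs the ratio $\E|P_N(0)|^{4k}\big/\big(\E|P_N(0)|^{2k}\big)^2$ to exceed a fixed power of $N$.

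There is one genuine gap. The moment estimates you invoke (Conrey--Gamburd, Selberg--Delange, or a direct divisor count) are only available \emph{uniformly} in $k$ up to roughly $k\le c\log N/\log_2 N$; this is the content of Harper's Theorem~\ref{thm:harper_highmoments}, which the paper quotes. But the statement requires $k$ as large as $N^{1/4}$. The paper closes this gap with a short convexity argument: since $\phi(p)=\log\E|P_N(0)|^{2p}$ is convex, both $\phi(k)$ and $\phi(2k)-2\phi(k)$ are increasing in $k$, so once the required inequalities hold at $k_0\asymp(\log N/\log_2 N)^{1/2}$ they persist for all larger $k$. Your proposal does not mention this step, and without it neither \eqref{eq:moment_mean_lowerbound} nor \eqref{eq:moment_variance_larger} is established in the full range. (A minor quantitative point: the ratio above is not of order $(\log N)^{2k^2+O(k)}$ as you write, since Harper's formula carries a factor $\exp(-ck^2\log k)$; the net growth is $\exp(ck^2\log_2 N)$, which is still enough.)
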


\begin{rmk}
The implicit constants in this theorem are to be understood as absolute (independent of $k, N$ and $\epsilon$).
\end{rmk}

A considerable amount is known about the moments on the right hand side of \eqref{eq:moment_mean_larger} and \eqref{eq:moment_mean_lowerbound}. We recall the most relevant result in Theorem \ref{thm:harper_highmoments} below.

From Theorem \ref{thm:harper_highmoments} the reader should check that the right hand side of \eqref{eq:moment_mean_lowerbound} is considerably larger than the gaussian moment $k!$ for large $k$. Thus the asymptotic $\E\, \mathfrak{m}_N^{(k)} \sim k!$ implied by Theorem \ref{thm:main_moments_Gaussian} for $k \ll (\log N/\log_2 N)^{1/3}$ cannot be true for $k \gg (\log N/\log _2 N)^{1/2}$.

Note that while \eqref{eq:moment_variance_larger} indicates that $\mathfrak{m}_N^{(k)}$ will not concentrate around its mean value for such large $k$ in the mean-square sense, it is quite possible that $\mathfrak{m}_N^{(k)}$ will concentrate around a median value in probability, even for $k$ of order $(\log N)^{1/2}$ or larger. The latter phenomenon would be far more subtle, and we are not able to address it here.

\begin{rmk}
The estimate \eqref{eq:moment_mean_larger} is not useful for $k = o(\log N / \log_2 N)^{1/2})$ as in this range the error term is larger than the main term. It seems possible that with more work the range of $k$ in which the moments of Theorem \ref{thm:main_moments_Gaussian} are Gaussian in mean square can be extended to  $k = o( (\log N/\log_2 N)^{1/2})$, but Theorem \ref{thm:main_moments_larger} tells us we can go no further. 
\end{rmk}

\begin{rmk}
As we will discuss in section \ref{sec:supremum}, unit-circle-moments would need to be well understood for $k \approx \log N$ in order to capture the true order of magnitude of $\max_\theta \left| P_N(\theta) \right|$ (see also \cite[Sec. 4.5]{green}). Theorem \ref{thm:main_moments_larger} thus rules out such an approach, at least if \eqref{eq:supnorm_conj} is true.
\end{rmk}

\begin{rmk}
Note that $N^{-1/2} \sum_{n\leq N} X(n) = P_N(0)$. Thus the average value of $2k$th moments on the unit circle are essentially controlled for large enough $k$ by the values of the polynomial at $\theta = 0$.
\end{rmk}

\subsection{Notation}
We will write $f \ll g$ or alternatively $f=O(g)$, if there exists a positive constant $C$ such that $|f|\leq C |g|$. A subscript $f\ll_t g$ may be added to emphasize the depence of the implicit constant $C$ on the parameter $t$. The symbol $\tau_k$ is reserved for the $k$-fold divisor function and we will write $n = \square$ to indicate that the integer $n$ is a perfect square.

\subsection{Acknowledgments}
We thank Adam Harper for explaining his previous work on Theorem \ref{thm:Gaussian_RMF} to us, pointing us in the direction of \cite{VW} for the computation of high-moments, and comments on a preprint version, Trevor Wooley for explaining some of the details of a point counting method to us, and Maxim Gerspach for pointing out a mistake in Theorem \ref{thm:main_moments_larger} in an earlier draft. Likewise we thank Oleksiy Klurman and an anonymous referee for helpful comments and corrections.
J.B. is supported by ERC Advanced Grant 692616. The research of A.N. was funded by ISF Grant 1903/18. B.R. received partial support from an NSERC grant and US NSF FRG grant 1854398.

\section{Method of proof}

\subsection{Point counting}

We will explicitly compute moments. Note that for $X(n)$ a Rademacher random multiplicative function, the value of
\begin{equation} \label{eq:P_N_moments_avg}
N^{\tfrac12 (j+k)} \cdot \E \int_0^1 \big( P_N(\theta) \big)^j \big( \overline{P_N(\theta)} \big)^k\, d\theta
\end{equation}
is equal to the count of integer solutions to the system of equations
\begin{equation} \label{eq:fundsystem_squares}
\begin{split}
m_1 \cdots m_{j} &n_1 \cdots n_{k}=\square   \\
m_1+\cdots m_j&=n_1 +\cdots +n_k,
\end{split}
\end{equation}
with $m_r, n_s \in [1,N]$ for all $r,s$. We label this count by $\mathcal{N}^{\, \square}_{j,k}(N)$. 

Likewise for $X(n)$ a Steinhaus random multiplicative function the value of \eqref{eq:P_N_moments_avg} is given by the count of integer solutions to the system of equations
\begin{equation} \label{eq:fundsystem_circles}
\begin{split}
m_1 \cdots m_{j} &=n_1 \cdots n_{k}   \\
m_1+\cdots m_j&=n_1 +\cdots +n_k,
\end{split}
\end{equation}
with $m_r, n_s \in [1,N]$ for all $r,s$. We label this count by $\mathcal{N}^{\, \bullet}_{j,k}(N)$.

An important subcollection of solutions to the systems \eqref{eq:fundsystem_squares} and \eqref{eq:fundsystem_circles} is given by the \emph{diagonal} contributions,
in which $\{m_r\}$ is a permutation of $\{n_s\}$.
In the notation of multisets, we define the count of diagonal contributions to be the count of integers solutions to
$$
\{m_1,...,m_j\} =  \{n_1,...,n_k\},
$$
with $m_r, n_s \in [1,N]$ for all $r,s$. We label this count by $\mathcal{D}_{j,k}(N)$.

\begin{lem} \label{lem:diag_count}
For $j,k \leq N^{1/2}$,
$$
\mathcal{D}_{j,k}(N) = \mathbf{1}_{jk} \, k! \,  N^k \left( 1+ O(k^2/N)\right).
$$
\end{lem}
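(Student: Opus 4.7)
The plan is to reduce the lemma to a counting question about permutations of a $k$-tuple, then use an elementary ``most tuples are distinct'' argument.

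First I would observe that the multiset equality $\{m_1,\dots,m_j\}=\{n_1,\dots,n_k\}$ forces $j=k$, which immediately gives the factor $\mathbf{1}_{jk}$ in the statement. So from here on I only need to analyze $\mathcal{D}_{k,k}(N)$. For a fixed tuple $\vec{m}=(m_1,\dots,m_k)\in[1,N]^k$, the number of tuples $\vec{n}$ forming the same multiset is exactly the number of distinct permutations of $\vec{m}$, namely $k!/(a_1!a_2!\cdots)$ where the $a_i$ are the multiplicities of the distinct values in $\vec{m}$. In particular this quantity equals $k!$ when the entries of $\vec{m}$ are pairwise distinct, and is at most $k!$ in general.

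Next I would split the sum according to whether the entries of $\vec{m}$ are all distinct. Let $T$ be the set of such tuples, so $|T|=N(N-1)\cdots(N-k+1)$. The contribution of $\vec{m}\in T$ is exactly $k!\,|T|$, while the contribution of $\vec{m}\notin T$ is at most $k!\,(N^k-|T|)$. Hence
\[
k!\,|T|\;\le\;\mathcal{D}_{k,k}(N)\;\le\;k!\,N^k.
\]
The main step is then to estimate $|T|$. Using $\log(1-x)=-x+O(x^2)$ for $|x|\le 1/2$ and the hypothesis $k\le N^{1/2}$,
\[
|T| = N^k\prod_{i=0}^{k-1}\Big(1-\frac{i}{N}\Big) = N^k\exp\Big(-\frac{k(k-1)}{2N}+O(k^3/N^2)\Big) = N^k\big(1+O(k^2/N)\big).
\]
Therefore $N^k-|T|\ll k^2 N^{k-1}$, and combining this with the two-sided bound above gives
\[
\mathcal{D}_{k,k}(N) = k!\,N^k\big(1+O(k^2/N)\big),
\]
as required.

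There is no real obstacle here; the only point needing mild care is to verify that both the multiplicative estimate for $|T|$ and the crude upper bound $k!(N^k-|T|)$ for the non-distinct contribution are absorbed into the same $O(k^2/N)$ error, which uses the hypothesis $k\le N^{1/2}$ to keep $k^2/N$ bounded (and $k^3/N^2$ negligible compared to $k^2/N$).
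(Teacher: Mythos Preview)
Your proof is correct and follows essentially the same approach as the paper: both arguments sandwich $\mathcal{D}_{k,k}(N)$ between $k!\,N(N-1)\cdots(N-k+1)$ and $k!\,N^k$, then show the falling factorial is $N^k(1+O(k^2/N))$. You give a bit more detail on why the count of matching $\vec{n}$ for a fixed $\vec{m}$ is at most $k!$ (via the multinomial coefficient), but the structure is identical.
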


\begin{proof}
If $j\neq k$ obviously $\mathcal{D}_{j,k}(N) = 0$. For $j=k$, we have
$$
k! N (N-1) \cdots (N-k+1) \leq \mathcal{D}_{j,k}(N) \leq k! N^k,
$$
with the lower bound coming from matches of $m_1,...,m_k$ in which all $m_j$ are distinct. And a crude bound reveals
\begin{multline*}
N(N-1)\cdots (N-k+1) = N^k \exp\left( \log(1-\tfrac{1}{N}) + \cdots + \log(1 - \tfrac{k-1}{N}) \right) \\= N^k \left( 1 + O(k^2/N)\right). \qedhere
\end{multline*}
\end{proof}

These diagonal contributions will constitute the most important contribution to the counts \eqref{eq:fundsystem_squares} and \eqref{eq:fundsystem_circles}. We thus seek to bound \emph{off-diagonal} counts,
$$
\mathcal{E}^{\, \square}_{j,k}(N) = \mathcal{N}^{\, \square}_{j,k}(N) - \mathcal{D}_{j,k}(N),
$$
and
$$
\mathcal{E}^{\, \bullet}_{j,k}(N) = \mathcal{N}^{\, \bullet}_{j,k}(N) - \mathcal{D}_{j,k}(N).
$$

\subsection{A factorization of Vaughan and Wooley}

Our main tool will be a variant of a device introduced by Vaughan and Wooley \cite{VW}. Let $sq(\nu)$ be the largest square divisor of a positive integer $\nu$, so that $\nu/sq(\nu)$ is squarefree.

\begin{lem}\label{lem:triangular}
If for positive integers $\nu_1,...,\nu_\ell$ we have 
$$
\nu_1\cdots \nu_\ell = \square,
$$
then there exists an upper triangular array of $\binom{\ell+1}{2}$ positive integers
\begin{equation} \label{eq:tri_array}
\begin{matrix}
b_{11} & b_{12} & \cdots & b_{1\ell} \\
 & b_{22} & \cdots & b_{2\ell} \\
 &  & \ddots & \vdots \\
 & &  & b_{\ell \ell}
\end{matrix}
\end{equation}
such that for $1 \leq r \leq \ell$, we have $b_{rr}^2 = sq(\nu_r)$, and $\nu_r$ is the product of row $r$ and column $r$ in this array:
$$
\nu_r = (b_{1r}\cdots b_{rr}) (b_{rr}\cdots b_{r\ell}).
$$
\end{lem}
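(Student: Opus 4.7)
The plan is to build the triangular array $(b_{ij})_{i\le j}$ one prime at a time, exploiting the fact that a squarefree product being a square forces an even number of factors to contain each prime. First, I factor $\nu_r = sq(\nu_r)\cdot u_r$, where $u_r := \nu_r/sq(\nu_r)$ is squarefree, and set $b_{rr} := \sqrt{sq(\nu_r)}$, which is a positive integer. Since $\nu_r = b_{rr}^2 \cdot \prod_{i<r} b_{ir}\cdot \prod_{j>r} b_{rj}$ is what we want, it remains to choose the off-diagonal entries $b_{ij}$ (for $i<j$) so that
\[
\prod_{i<r} b_{ir}\cdot \prod_{j>r} b_{rj} = u_r \qquad (1 \le r \le \ell).
\]

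Because each $u_r$ is squarefree and $u_1\cdots u_\ell = (\nu_1\cdots\nu_\ell)/(b_{11}\cdots b_{\ell\ell})^2$ is a square, the set $S_p := \{ r : p \mid u_r\}$ has even cardinality for every prime $p$. I then work prime by prime: for each prime $p$ appearing in some $u_r$, partition $S_p$ into $|S_p|/2$ disjoint unordered pairs $\{i,j\}$, and for every such pair (with $i<j$) multiply $b_{ij}$ by $p$. Entries $b_{ij}$ not touched by any such pairing remain equal to $1$.

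To verify the identity, fix a prime $p$ and a row index $r$. The $p$-adic valuation of $\prod_{i<r} b_{ir}\prod_{j>r} b_{rj}$ equals the number of pairs from the partition of $S_p$ containing $r$, which is $1$ if $r\in S_p$ and $0$ otherwise. This matches $v_p(u_r)\in\{0,1\}$. Since this holds for every prime, the product equals $u_r$, and hence $\nu_r = b_{rr}^2 \cdot u_r = sq(\nu_r)\cdot u_r$, as required.

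I do not anticipate a serious obstacle: the content of the lemma is really just the parity observation that a squarefree product is a square iff each prime divides an even number of factors, which in graph-theoretic language says one can always realize a $\{0,1\}$-valued degree sequence of even total by a matching on $\ell$ vertices. The only thing to be careful about is that the diagonal entry $b_{rr}$ contributes $b_{rr}^2$ (appearing once as the end of its row-prefix and once as the start of its column-suffix in the stated product), so the diagonal indeed carries exactly the square part of $\nu_r$ and the off-diagonal construction carries exactly the squarefree part.
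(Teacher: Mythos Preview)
Your proof is correct. Both you and the paper set the diagonal entries to $b_{rr}=\sqrt{sq(\nu_r)}$ and then distribute the squarefree parts $u_r=\nu_r/sq(\nu_r)$ among the off-diagonal entries, but the organization differs. The paper argues by induction on $E=u_1\cdots u_\ell$: as long as $E>1$, one finds two indices $u<v$ with $c=\gcd(u_u,u_v)>1$, divides both by $c$ (which reduces $E$ while leaving each $sq(\nu_r)$ unchanged), applies the inductive hypothesis, and then multiplies the resulting entry $b_{uv}$ by $c$. You instead give a one-shot construction by treating each prime separately: the parity observation that $|S_p|$ is even lets you choose a perfect matching on $S_p$, and each matched pair $\{i,j\}$ contributes a factor $p$ to $b_{ij}$. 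Your approach is a bit more explicit and makes the combinatorial content (even degree sequence $\Rightarrow$ matching) visible; the paper's inductive argument is slightly terser and, by pulling out whole gcds rather than single primes, naturally handles several primes at once. Either proof adapts just as easily to the rectangular analogue used later.
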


\begin{proof}
Note the numbers $\nu_r/sq(\nu_r)$ will be squarefree for all $r$. We fix $\ell$ and let $E = (\nu_1/sq(\nu_1))\cdots (\nu_\ell/sq(\nu_\ell))$. Obviously such an array exists if $E=1$. We use induction on $E$, supposing we have proved the claim for all smaller values. We have $E = \square$, and so if $E > 1$, we must have $\gcd(\nu_u/sq(\nu_u), \nu_v/sq(\nu_v))~>~1$ for some distinct indices $u < v$. 
Let $c = \gcd(\nu_u/sq(\nu_u), \nu_v/sq(\nu_v))$.
Note that $c$ must be squarefree, and further note
$$
(\nu_1/sq(\nu_1))\cdots \left(\tfrac{\nu_u/sq(\nu_u)}{c}\right) \cdots \left(\tfrac{\nu_v/sq(\nu_v)}{c}\right) \cdots (\nu_\ell/sq(\nu_\ell)) = \square.
$$
By inductive hypothesis an array $b_{rs}$ generates the values $\nu_1,\cdots, \left(\tfrac{\nu_u}{c}\right), \cdots, \left(\tfrac{\nu_v}{c}\right), \cdots, \nu_\ell$ with its row-times-column products, and thus one may check the array
$$
\begin{cases}
b_{rs} & \textrm{if}\; (r,s) \neq (u,v) \\
c \, b_{uv} & \textrm{if}\; (r,s) = (u,v)
\end{cases}
$$
generates the values $\nu_1,...,\nu_\ell$ with its row-times-column products, which completes the proof.
\end{proof}

The above lemma will allow us to consider moments for both Rademacher and Steinhaus random multiplicative functions. If we were treating the Steinhaus case alone, then it would be enough to use the original version of the device, that follows.

\begin{lem} [Vaughan-Wooley] \label{lem:rectangular}
If for positive integers $m_1,...,m_j$ and $n_1,...,n_k$ we have
$$
m_1\cdots m_j = n_1 \cdots n_k,
$$
then there exists a $j\times k$ array of positive integers
\begin{equation} \label{eq:rect_array}
\begin{matrix}
a_{11} & a_{12} & \cdots & a_{1k} \\
a_{21} & a_{22} & \cdots & a_{2k} \\
\vdots & \vdots & \ddots & \vdots \\
a_{j1} & a_{j2} & \cdots & a_{jk}
\end{matrix}
\end{equation}
such that for $1 \leq r \leq j$ the product of row $r$ is equal to $m_r$:
$$
m_r = a_{r1}a_{r2}\cdots a_{rk},
$$
and for $1 \leq s \leq k$ the product of column $s$ is equal to $n_s$:
$$
n_s = a_{1s}a_{2s}\cdots a_{js}.
$$
\end{lem}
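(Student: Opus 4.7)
The plan is to proceed by induction on the common product $M = m_1\cdots m_j = n_1\cdots n_k$, in a fashion entirely analogous to the triangular case handled in Lemma \ref{lem:triangular}. The base case $M = 1$ is immediate: every $m_r$ and every $n_s$ equals $1$, and the $j\times k$ array all of whose entries are $1$ satisfies the row- and column-product conditions trivially.

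For the inductive step, assume $M > 1$ and that the statement holds for all smaller common products. Select any prime $p \mid M$. Since $p \mid m_1 \cdots m_j = n_1 \cdots n_k$, there exist indices $r_0 \in \{1,\ldots,j\}$ and $s_0 \in \{1,\ldots,k\}$ with $p \mid m_{r_0}$ and $p \mid n_{s_0}$. Replace $m_{r_0}$ by $m'_{r_0} = m_{r_0}/p$ and $n_{s_0}$ by $n'_{s_0} = n_{s_0}/p$, leaving the other entries unchanged; the modified sequences now share common product $M/p < M$. By the inductive hypothesis there is a $j\times k$ array $(a'_{rs})$ with row products equal to the $m'_r$ and column products equal to the $n'_s$. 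Define
$$
a_{rs} = \begin{cases} p\, a'_{r_0 s_0} & \text{if } (r,s) = (r_0, s_0), \\ a'_{rs} & \text{otherwise}. \end{cases}
$$
Multiplying the single entry at position $(r_0,s_0)$ by $p$ multiplies precisely the row-$r_0$ product and the column-$s_0$ product by $p$, so these become $m_{r_0}$ and $n_{s_0}$ respectively, while every other row and column product is unchanged. This completes the induction.

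I do not anticipate a serious obstacle: the only point requiring care is the bookkeeping observation that, in a rectangular array, each cell belongs to exactly one row and exactly one column, so a modification at a single cell has a controlled effect on exactly one row product and one column product. Compared with the triangular version of Lemma \ref{lem:triangular}, the argument is in fact simpler, because there is no squareness constraint to preserve and hence no need to manipulate squarefree kernels or to group primes in pairs via a $\gcd$. An alternative formulation would be to process the prime factors of $M$ one at a time from the start, iteratively placing each prime into an appropriate cell; this gives the same array but phrases the induction externally.
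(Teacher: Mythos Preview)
Your proof is correct and follows essentially the approach the paper indicates: the paper does not spell out a proof but simply remarks that the same inductive procedure as in Lemma~\ref{lem:triangular} works (and refers to \cite{VW,granville}), and your argument is exactly such an adaptation. Your choice to induct directly on the common product $M$ one prime at a time, rather than on a product of squarefree kernels via a $\gcd$ as in the triangular case, is the natural simplification you note---the squareness constraint being absent---and is consistent with what the paper has in mind.
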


This lemma can be proved using the same inductive procedure as above (see also the original proof in \cite[Section 8]{VW}, and see also \cite[Section 4]{granville}). 

\subsection{A rough outline}

We carry out the details of bounding $\mathcal{E}^{\, \square}_{j,k}(N)$ and $\mathcal{E}^{\, \bullet}_{j,k}(N)$ later, but an approximation to the idea can be explained here. As observed by Vaughan and and Wooley, each non-diagonal solution to \eqref{eq:fundsystem_circles} implies at least one `non-diagonal' solution to
$$
(a_{11}\cdots a_{1k}) + \cdots + (a_{j1}\cdots a_{jk}) = (a_{11}\cdots a_{j1}) + \cdots +( a_{1k} \cdots a_{jk}),
$$
in which at least one of the products on the left hand side has no matching product on the right hand side, which will be seen to force a solution to one of the $a_{rs}$ in terms of the others, e.g.
$$
a_{11}(a_{12}\cdots a_{1k} - a_{21}\cdots a_{j1}) = (a_{12}\cdots a_{j2} + \cdots + a_{1k}\cdots a_{jk}) - (a_{21}\cdots a_{2k} + \cdots + a_{j1}\cdots a_{jk}),
$$
and this loss in degrees of freedom entails fewer non-diagonal solutions than diagonal. This is a slight oversimplification, but with only a little more care the argument can be made rigorous. A similar argument works for the system \eqref{eq:fundsystem_squares}. Likewise we obtain the mean-square estimate in Theorem \ref{thm:main_moments_Gaussian} by reducing the variance of $\mathfrak{m}_N^{(j,k)}$ to a related point count.

Note however the number of solutions to e.g. $m_1 = a_{11}\cdots a_{1k}$ grows as $k$ grows, and this phenomenon is responsible for this argument breaking down as $k$ increases with $N$ in the form of Theorem \ref{thm:main_moments_larger}. 

Theorem \ref{thm:Gaussian_RMF} then follows from these moment estimates by using the moment method and a simple bound on $L^p$-norms of short-interval sums of random multiplicative functions to bootstrap from an asymptotically almost sure central limit theorem to an almost sure central limit theorem. Theorem \ref{thm:supnorm} likewise follows from these moment estimates, using the fact that for degree $N$ trigonometric polynomials, the sup-norm is reasonably close to an $L^p$-norm, even for $p$ relatively small compared to $N$.

\section{Point counting}

\subsection{Preliminary results}

We state the following results regarding the $k$-fold divisor function for easy reference later.

\begin{lem}[Divisor sum bounds]\label{lem:divisorsumbound}
For $N\geq 3$ and $\ell \geq 1$,
$$
\sum_{n \leq N} \tau_\ell(n) \le N (2\log N)^{\ell-1}.
$$
\end{lem}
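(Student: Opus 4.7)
The plan is to prove the bound by induction on $\ell$, using the Dirichlet convolution identity $\tau_\ell = \tau_{\ell-1} \ast \mathbf{1}$ together with the elementary estimate on the harmonic sum.

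For the base case $\ell = 1$, the function $\tau_1$ is identically $1$, so the sum equals $N$ and the bound $N(2\log N)^0 = N$ holds trivially.

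For the inductive step, suppose the claim holds for $\ell-1 \geq 1$. Writing $\tau_\ell(n) = \sum_{dm = n} \tau_{\ell-1}(m)$ and swapping the order of summation, I would estimate
$$
\sum_{n \leq N} \tau_\ell(n) = \sum_{d \leq N} \sum_{m \leq N/d} \tau_{\ell-1}(m) \leq \sum_{d \leq N} \frac{N}{d} \big( 2\log(N/d) \big)^{\ell-2},
$$
where the inductive hypothesis is applied to the inner sum (taking care that when $N/d < 3$ the inner sum can be bounded trivially by $N/d$, which is no larger). Since $2\log(N/d) \leq 2\log N$, one can pull the power out and reduce to the harmonic estimate
$$
\sum_{d \leq N} \frac{1}{d} \leq 1 + \log N \leq 2\log N,
$$
valid for $N \geq 3$ (in fact for $N \geq e$). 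This yields $\sum_{n\le N} \tau_\ell(n) \leq N(2\log N)^{\ell-1}$ as desired.

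The argument is essentially routine; the only mild subtlety is handling the edge case where $N/d$ is small so that the inductive bound does not literally apply, which is dealt with by the trivial bound $\tau_{\ell-1}(m) \geq 1$ being replaced by direct counting. No real obstacle is anticipated.
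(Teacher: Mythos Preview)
Your inductive approach is morally the same as the paper's --- both reduce to the harmonic bound $\sum_{d \le N} 1/d \le 1 + \log N$ --- but the paper unfolds all $\ell$ variables at once rather than inducting:
\[
\sum_{n \le N} \tau_\ell(n) = \sum_{a_1 \cdots a_\ell \le N} 1 \le \sum_{a_1 \cdots a_{\ell-1} \le N} \frac{N}{a_1 \cdots a_{\ell-1}} \le N\Big(\sum_{a\le N}\frac1a\Big)^{\ell-1} \le N(1 + \log N)^{\ell-1}.
\]
This sidesteps any edge-case analysis.

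Your handling of the edge case is not correct as written. You assert that when $N/d < 3$ the inner sum $\sum_{m \le N/d} \tau_{\ell-1}(m)$ is trivially bounded by $N/d$. But for $2 \le N/d < 3$ this sum equals $\tau_{\ell-1}(1) + \tau_{\ell-1}(2) = 1 + (\ell-1) = \ell$, which exceeds $N/d$ once $\ell \ge 3$. The defect is minor and patchable (the total contribution from such $d$ is $O(N\ell)$, easily absorbed by the target $N(2\log N)^{\ell-1}$), but the one-shot argument above is cleaner and avoids the bookkeeping entirely.
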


\begin{proof}
We have
$$
\sum_{n\leq N} \tau_\ell(n) = \sum_{a_1\cdots a_\ell \leq N} 1 \le \sum_{a_1\cdots a_{\ell-1} \leq N} \frac{N}{a_1\cdots a_{\ell-1}} \leq N (1+ \log N)^{\ell-1}.
$$
As $1+ \log N \leq 2\log N$ for $N\geq 3$ the result follows.
\end{proof}

We also note the well-known pointwise divisor bound: for all fixed $\ell \geq 1$ and $\varepsilon > 0$, we have
\begin{equation}\label{eq:divisor_bound}
\tau_\ell(n) \leq_{\ell, \varepsilon} n^\varepsilon,
\end{equation}
obtained by iterating the bound for $\ell = 2$ (see e.g. \cite[(2.20)]{montgomeryvaughan}).

In addition we have the crude estimates
\begin{equation}\label{eq:div_jk_bound}
\tau_j(n)\tau_k(n) \leq \tau_{jk}(n),
\end{equation}
\begin{equation}\label{eq:div_nm_bound}
\tau_k(nm) \leq \tau_k(n)\tau_k(m).
\end{equation}
Both are simple if slightly tedious to verify by setting $m = p^a$ and $n = p^b$ for a prime $p$, then using multiplicativity to extend to other values. \eqref{eq:div_nm_bound} has appeared before as \cite[(45)]{sandor}.

These estimates then imply the following bounds, for all $\ell \geq 1$
\begin{equation}\label{eq:tau_squared}
\sum_{n\leq N} \tau_\ell(n)^2 \leq \sum_{n\leq N} \tau_{\ell^2}(n) \leq N (2 \log N)^{\ell^2-1},
\end{equation}
\begin{equation}\label{eq:tau_of_squares}
\sum_{n\leq N} \tau_\ell(n^2) \leq \sum_{n\leq N} \tau_\ell(n)^2 \leq N (2\log N)^{\ell^2-1}.
\end{equation}
A somewhat better estimate than \eqref{eq:tau_squared} has appeared in \cite[Theorem 1.7]{Nor} for a limited range of $\ell$, but the estimate here will be sufficient for our purposes.

\subsection{The mean of moments: Gaussian range}

We compute counts which are related to $\mathbb{E} \, \mathfrak{m}_N^{(j,k)}$. In the arguments that follow we label the collection of triangular arrays of positive integers \eqref{eq:tri_array} by $T_\ell$ (thus $T_\ell$ is isomorphic to $\mathbb{N}^{\binom{\ell+1}{2}}$) and for $\mathbf{b} \in T_\ell$, we let
$$
b_r^\ast = (b_{1r}\cdots b_{rr})(b_{rr}\cdots b_{r\ell})
$$
denote the product of the $r$th column and $r$th row.

\begin{lem}\label{lem:square_offdiag}
Uniformly for all $1\leq j\leq k$ and $N\geq 10$,
$$
\Esquare_{j,k}(N) \ll N^{(j+k)/2} \exp\Big[ - \frac{\log N}{6k} + O(k^2 (\log k + \log_2 N))\Big].
$$
\end{lem}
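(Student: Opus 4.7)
The plan is to encode each solution of the system \eqref{eq:fundsystem_squares} via Lemma \ref{lem:triangular}, converting the count of off-diagonal contributions into a count over triangular arrays, and then to exploit the additive equation to secure an off-diagonal saving. Writing $\ell = j+k$ and identifying $\nu_r = m_r$ for $1 \leq r \leq j$ and $\nu_{j+s} = n_s$ for $1 \leq s \leq k$, Lemma \ref{lem:triangular} produces for each solution at least one $\mathbf{b} \in T_\ell$ such that $b_r^\ast = \nu_r$ for all $r$. A solution contributes to $\Esquare_{j,k}(N)$ precisely when $\{m_r\}_{r\leq j} \neq \{n_s\}_{s\leq k}$ as multisets, equivalently when $\{b_r^\ast\}_{r\leq j} \neq \{b_r^\ast\}_{r>j}$. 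Writing $\epsilon_r = +1$ for $r \leq j$ and $\epsilon_r = -1$ otherwise, this yields
$$
\Esquare_{j,k}(N) \leq \#\Big\{\mathbf{b} \in T_\ell : b_r^\ast \leq N \; \forall\, r,\ \textstyle\sum_{r=1}^\ell \epsilon_r b_r^\ast = 0,\ \{b_r^\ast\}_{r\leq j} \neq \{b_r^\ast\}_{r>j}\Big\}.
$$

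Next I would identify a \emph{pivot} entry $b_{pq}$ whose value is forced by the additive equation. Each $b_r^\ast = b_{rr}^2 \prod_{s<r} b_{sr} \prod_{s>r} b_{rs}$ is a monomial in the array entries, so after fixing every entry except $b_{pq}$, the additive equation becomes a polynomial equation of degree at most two in $b_{pq}$ (quadratic when $p=q$, linear when $p<q$). When its leading coefficient is non-zero, $b_{pq}$ is determined up to $O(1)$ integer values. The main obstacle is the combinatorial fact that for a non-matching array such a pivot always exists: if every candidate pivot produced a vanishing coefficient then one can show that the two multisets $\{b_r^\ast\}$ must coincide, contradicting the hypothesis. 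I would implement this by stripping off matched pairs of $b_r^\ast$'s until one arrives at an irreducible non-matching configuration in which some natural candidate entry necessarily has a non-vanishing coefficient; this combinatorial pivot lemma is the hard part of the argument.

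With a pivot secured, the remaining $\binom{\ell+1}{2}-1$ entries are counted under the size constraints $b_r^\ast \leq N$. Using the identity $\prod_r b_r^\ast = \big(\prod_{r\leq s} b_{rs}\big)^2$, these constraints give $\prod_{r\leq s} b_{rs} \leq N^{(j+k)/2}$, and the divisor-sum estimate of Lemma \ref{lem:divisorsumbound} together with \eqref{eq:divisor_bound}--\eqref{eq:tau_of_squares} produces a count of the shape $N^{(j+k)/2} \exp\big(O(k^2(\log k + \log_2 N))\big)$. The additional saving $\exp(-\log N/(6k))$ comes from tracking the typical size of the pivot: the row–column constraints $b_p^\ast, b_q^\ast \leq N$ confine $b_{pq}$ to a range of logarithmic size at most $\log N/(6k)$ (after modest bookkeeping losses), and restricting it to $O(1)$ values via the forced equation produces precisely this saving. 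The remaining steps past the combinatorial pivot lemma amount to routine bookkeeping with the divisor bounds assembled in Section \ref{sec:introduction}.
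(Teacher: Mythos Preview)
Your overall framework matches the paper: encode solutions via the triangular array of Lemma~\ref{lem:triangular}, strip off matched pairs to reduce to a disjoint configuration, and use the additive relation to pin down one entry. But the mechanism you describe for extracting the power saving $N^{-1/6k}$ is not correct as written, and the missing idea is exactly what makes the argument work.

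You write that ``the row--column constraints $b_p^\ast,b_q^\ast\le N$ confine $b_{pq}$ to a range of logarithmic size at most $\log N/(6k)$.'' This is false for a generic pivot: if the other entries in rows/columns $p$ and $q$ happen to be small (in the extreme case all equal to $1$), then $b_{pq}$ is only constrained to lie in $[1,N]$, and pinning it to $O(1)$ values yields no power of $N$ in savings. What the paper does instead is choose the pivot to be a \emph{maximal} entry $b_{uv}$ of the array. Maximality forces a dichotomy: either $b_{uv}\ge N^{1/(\ell+1)}$, in which case removing it from the product $b_u^\ast\le N$ leaves the remaining factors bounded by $N^{1-1/(\ell+1)}$; or $b_{uv}<N^{1/(\ell+1)}$, in which case every entry is that small and the same bound on the truncated product holds trivially. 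Multiplying the resulting constraints over all rows gives $\prod_{(r,s)\neq(u,v)}b_{rs}^2\le N^{\ell-1/(\ell+1)}$, and Lemma~\ref{lem:divisorsumbound} then produces the saving. Without maximality the product of the non-pivot entries is only bounded by $N^{\ell/2}$, which is exactly the diagonal size and gives nothing.

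A second, smaller issue: you propose to strip matched pairs at the level of the $b_r^\ast$'s. This does not cleanly reduce to a smaller triangular array, since equal values $b_u^\ast=b_v^\ast$ with $u\le j<v$ share the entry $b_{uv}$. The paper performs the stripping on the original variables $(m_r,n_s)$, reducing $\Esquare_{j,k}$ to the disjoint count $\Fsquare_{j,k}$ \emph{before} passing to triangular arrays; disjointness of $\{m_r\}$ and $\{n_s\}$ then guarantees that when the maximal entry straddles the two blocks (i.e.\ $u\le j<v$) one has $b_u^\ast\neq b_v^\ast$, so the linear equation for $b_{uv}$ is genuinely nondegenerate. Your ``combinatorial pivot lemma'' is therefore not the hard part once the reduction is organised this way; the real content is the maximal-entry trick.
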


\begin{proof}
We first prove an upper bound for $\Fsquare_{j,k}(N)$ defined as follows: $\Fsquare_{j,k}(N)$ is the count of $(m_1,...,m_j,n_1,...,n_k) \in \mathbb{N}^{j+k}$ such that $m_r, n_s \in [1,N]$ for all $1\leq r \leq j$, $1 \leq s \leq k$, and
\begin{gather*}
m_1 \cdots m_{j} n_1 \cdots n_{k}=\square   \\
m_1+\cdots m_j=n_1 +\cdots +n_k, \\
\textrm{the sets} \; \{m_1,...m_j\} \; \textrm{and} \; \{n_1,...,n_k\} \; \textrm{have no elements in common.}
\end{gather*}

We also define $\Gsquare_{j,k}(N)$ to be the count of $\mathbf{b} \in T_{j+k}$ satisfying
\begin{enumerate}[(i)]
\item $b_1^\ast,...,b_{j+k}^\ast \in [1,N]$,
\item $b_1^\ast + \cdots + b_j^\ast = b_{j+1}^\ast + \cdots + b_{j+k}^\ast$, and
\item the sets $\{b_1^\ast,...,b_j^\ast\}$ and $\{b_{j+1}^\ast,...,b_{j+k}^\ast\}$ have no elements in common.
\end{enumerate}
For notational reasons let $\ell = j+k$. For indices $u \leq v$ of the $\ell \times \ell$ upper triangle, we likewise let $\Gsquare_{j,k}(N;\, u,v)$ be the count of $\mathbf{b} \in T_\ell$ satisfying (i), (ii), (iii) above and in addition
\begin{enumerate}[(iv)]
\item $b_{uv}$ is a maximal entry of $\mathbf{b}$.
%$b_{rs} \leq b_{uv}$ for all indices $1 \leq r \leq s \leq \ell$.
\end{enumerate}

By Lemma \ref{lem:triangular},
\begin{equation}\label{eq:F_boundedby_G}
\Fsquare_{j,k}(N) \leq \Gsquare_{j,k}(N),
\end{equation}
and plainly
\begin{equation}\label{eq:G_boundedby_maximalG}
\Gsquare_{j,k}(N) \leq \sum_{1 \leq u \leq v \leq \ell} \Gsquare_{j,k}(N;\, u,v).
\end{equation}

So it will be sufficient to treat $\Gsquare_{j,k}(N;\, u,v)$. When $b_{uv}$ is the largest entry of $\mathbf{b}$, then  $b_u^\ast \leq N$ and $b_v^\ast \leq N$ imply that either
\begin{equation}\label{eq:bprod_small1}
\begin{split}
(b_{1u}\cdots  b_{uu}) (b_{uu} \cdots \widehat{b_{uv}} \cdots b_{u\ell}) &\leq N^{1-1/(\ell+1)}\\ 
\textrm{and}&  \\
(b_{1v}\cdots \widehat{b_{uv}} \cdots b_{vv}) (b_{vv} \cdots b_{v\ell}) &\leq N^{1-1/(\ell+1)}
\end{split}
\quad (\textrm{if}\; u < v)
\end{equation}
or
\begin{equation}\label{eq:bprod_small2}
(b_{1v}\cdots \widehat{b_{vv}}) (\widehat{b_{vv}}\cdots b_{v\ell}) \leq N^{1-2/(\ell+1)} \quad (\textrm{if}\; u = v),
\end{equation}
where $\widehat{b_{uv}}$ or $\widehat{b_{vv}}$ indicates these terms are excluded from the products above.

Furthermore, if $1 \leq u \leq j$ and $j+1 \leq v \leq k$, and utilizing that by (ii)  $b_1^\ast + \cdots + b_j^\ast = b_{j+1}^\ast + \cdots + b_{j+k}^\ast$, we must have 
\begin{align*}
&b_{uv} \left( (b_{1u}\cdots  b_{uu}) (b_{uu} \cdots \widehat{b_{uv}} \cdots b_{u\ell}) - (b_{1v}\cdots \widehat{b_{uv}} \cdots b_{vv}) (b_{vv} \cdots b_{v\ell})\right) \\ 
&\quad\quad = b_u^\ast - b_v^\ast 
= \sum_{\substack{j+1 \leq r \leq j+k \\ r \neq v}} b_r^\ast - \sum_{\substack{1\leq r \leq j \\ r \neq u}} b_r^\ast.
\end{align*}
By (iii) the sets $\{b_1^\ast,...,b_j^\ast\}$ and $\{b_{j+1}^\ast,...,b_{j+k}^\ast\}$ are disjoint, and in particular $b_u^\ast - b_v^\ast \ne 0$. Hence, if $1 \leq u \leq j$ and $j+1 \leq v \leq k$, for $\mathbf{b}$ contributing to the count of $\Gsquare_{j,k}(N;\, u,v)$, we have that $b_{uv}$ is determined by the values of $b_{rs}$ for $(r,s) \neq (u,v)$. An easier analysis shows $b_{uv}$ is determined by other values of $b_{rs}$ for $u,v$ not in this range as well. It thus follows for all $u\leq v$,
$$
\Gsquare_{j,k}(N;\, u,v) \leq \#\{ (b_{11},b_{12},..., \widehat{b_{uv}},...,b_{\ell\ell}):\, b_{11}^2 b_{12}^2 \cdots \widehat{ (b_{uv}^2)} \cdots b_{\ell\ell}^2 \leq N^{\ell - 1/(\ell+1)}\},
$$
since multiplying together the relations $b_r^\ast \leq N$ for all $r \neq u,v$ with the relations \eqref{eq:bprod_small1} or \eqref{eq:bprod_small2} for $r = u$ or $v$ yield the inequality inside the brackets above. The above count is just the sum of a divisor function, and applying Lemma \ref{lem:divisorsumbound} we obtain,
$$
\Gsquare_{j,k}(N; \, u,v) \ll N^{\tfrac{\ell}{2} - \tfrac{1}{2(\ell+1)}} (\ell \log N)^{\binom{\ell+1}{2}-1},
$$
for all $u,v$. Thus from \eqref{eq:G_boundedby_maximalG} and \eqref{eq:F_boundedby_G},
\begin{align}\label{eq:F_bound}
\notag \Fsquare_{j,k}(N) &\ll \binom{\ell+1}{2} N^{\tfrac{\ell}{2} - \tfrac{1}{2(\ell+1)}} (\ell \log N)^{\binom{\ell+1}{2}} \\
\notag & \ll \ell^{\ell^2} N^{\tfrac{\ell}{2}- \tfrac{1}{3\ell}} (\log N)^{\ell^2} \\
& \ll (2k)^{4k^2} N^{\tfrac{j+k}{2}- \tfrac{1}{6k}} (\log N)^{4k^2},
\end{align}
as $\ell = j+k \leq 2k$.

Turning finally to $\Esquare_{j,k}(N)$ with $j\leq k$, if $m_1,...m_j,n_1,...,n_k$ is a non-diagonal solution to the system \eqref{eq:fundsystem_squares}, then either (i) $m_r = n_s$ for some $1\leq r \leq j$ and $1 \leq s \leq k$ and the remaining $m, n$ are a non-diagonal solution to such a system with $j$ and $k$ reduced by $1$, or the sets $\{m_1,...,m_j\}$ and $\{n_1,...,n_k\}$ have no elements in common. Hence
$$
\Esquare_{j,k}(N) \leq jk \,N \, \Esquare_{j-1,k-1}(N) + \Fsquare_{j,k}(N),
$$
and by inductively expanding this relation
\begin{multline*}
\Esquare_{j,k}(N) \leq \Fsquare_{j,k}(N) + jk\, N \, \Fsquare_{j-1,k-1}(N) + j(j-1)k(k-1)\, N^2 \, \Fsquare_{j-2,k-2}(N) \\
+ \cdots + j!\, k(k-1)\cdots(k-j+1)\, N^{j-1} \Fsquare_{1,k-j+1}(N),
\end{multline*}
noting that $\Esquare_{1,k}(N) = \Fsquare_{1,k}(N)$. Hence from \eqref{eq:F_bound},
\begin{align*}
\Esquare_{j,k}(N) &\ll j \cdot j!\, k! \, (2k)^{4k^2} \, N^{(j+k)/2-1/6k} (\log N)^{4k^2} \\
&\ll  N^{(j+k)/2} \exp\Big[ - \frac{\log N}{6k} + O(k^2 (\log k + \log_2 N))\Big],
\end{align*}
as claimed.
\end{proof}

As an immediate consequence, because any off-diagonal solution to \eqref{eq:fundsystem_circles} also constitutes an off-diagonal solution to \eqref{eq:fundsystem_squares}, we immediately obtain

\begin{lem}\label{lem:circle_offdiag}
Uniformly for all $1\leq j\leq k$ and $N\geq 10$,
$$
\Ecircle_{j,k}(N) \ll N^{(j+k)/2} \exp\Big[ - \frac{\log N}{6k} + O(k^2 (\log k + \log_2 N))\Big].
$$
\end{lem}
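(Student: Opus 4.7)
The plan is to reduce directly to Lemma \ref{lem:square_offdiag}, which has just been established. The key observation is that the Steinhaus system \eqref{eq:fundsystem_circles} is strictly more restrictive than the Rademacher system \eqref{eq:fundsystem_squares}: if tuples $(m_1,\dots,m_j,n_1,\dots,n_k)$ satisfy
$$
m_1\cdots m_j = n_1\cdots n_k,
$$
then in particular
$$
m_1\cdots m_j \cdot n_1\cdots n_k = (m_1\cdots m_j)^2 = \square,
$$
so the product-equality of the first condition in \eqref{eq:fundsystem_circles} entails the square condition in \eqref{eq:fundsystem_squares}. The second equation (on sums) is identical in the two systems.

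Consequently, every solution counted by $\Ncircle_{j,k}(N)$ is also counted by $\Nsquare_{j,k}(N)$. Since the notion of a diagonal contribution is defined purely in terms of the multiset equality $\{m_r\} = \{n_s\}$ and hence matches in both settings, the same inclusion passes to off-diagonal contributions:
$$
\Ecircle_{j,k}(N) \;\leq\; \Esquare_{j,k}(N).
$$
Applying Lemma \ref{lem:square_offdiag} immediately yields the claimed upper bound, with no additional work required.

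There is no real obstacle here; the content of the lemma is entirely contained in Lemma \ref{lem:square_offdiag}, and the proof is a single-line observation that the Rademacher point count dominates the Steinhaus one off the diagonal. The only thing to be careful about is that the diagonal counts $\mathcal{D}_{j,k}(N)$ genuinely agree for both systems (they do, by definition), so that subtracting them from both sides preserves the inequality.
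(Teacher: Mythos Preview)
Your proof is correct and matches the paper's own argument essentially verbatim: the paper also observes that any off-diagonal solution to \eqref{eq:fundsystem_circles} is an off-diagonal solution to \eqref{eq:fundsystem_squares}, and then simply invokes Lemma \ref{lem:square_offdiag}.
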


Note that if $A$ is a sufficiently small constant, for $k \leq A\, (\log N / \log_2 N)^{1/3},$ we have $\exp[ - \log N/6k + O(k^2 (\log k + \log_2 N))] \ll 1/N^{1/7k}$. Thus, combining these upper-bounds for off-diagonal counts with Lemma \ref{lem:diag_count}, we deduce

\begin{prop}\label{prop:expectation_estimate}
For $X(n)$ a Rademacher or Steinhaus random multiplicative function, we have
$$
\mathbb{E}\, \mathfrak{m}_N^{(j,k)} =  k!\, \mathbf{1}_{jk} + O\Big(\frac{1}{N^{1/7k}}\Big),
$$
uniformly for $1 \leq j \leq k \leq A\, (\log N / \log_2 N)^{1/3},$ for a sufficiently small absolute constant $A > 0$.
\end{prop}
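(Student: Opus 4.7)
The plan is to read Proposition 3.4 as a straightforward book-keeping consequence of the three preceding results. The starting identity is
\[
N^{(j+k)/2} \cdot \mathbb{E}\,\mathfrak{m}_N^{(j,k)} \;=\; \Nsquare_{j,k}(N) \;=\; \mathcal{D}_{j,k}(N) + \Esquare_{j,k}(N)
\]
in the Rademacher case, and the analogous identity with $\Ncircle_{j,k}$, $\Ecircle_{j,k}$ in the Steinhaus case. Dividing through by $N^{(j+k)/2}$, the claim reduces to showing that the diagonal piece gives the main term $k!\,\mathbf{1}_{jk}$ and that the off-diagonal piece is absorbed into the error $O(N^{-1/7k})$.

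For the diagonal, I would invoke Lemma 3.3. If $j \neq k$ then $\mathcal{D}_{j,k}(N)=0$ and there is nothing to extract. If $j=k$, then since $(j+k)/2=k$,
\[
N^{-k}\,\mathcal{D}_{k,k}(N) \;=\; k!\bigl(1+O(k^2/N)\bigr),
\]
and I need to absorb the secondary term $k!\cdot k^2/N$ into $O(N^{-1/7k})$. In the allowed range $k\le A(\log N/\log_2 N)^{1/3}$ we have $k!\,k^2 \le k^{k+2} \le \exp(O((\log N)^{1/3}\log_2 N))$, which is much smaller than $N^{1-1/7k}$, so this contribution is comfortably swallowed by the desired error.

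For the off-diagonal piece, Lemmas 3.5 and 3.6 give, uniformly in the relevant range,
\[
N^{-(j+k)/2}\,\Esquare_{j,k}(N),\; N^{-(j+k)/2}\,\Ecircle_{j,k}(N) \;\ll\; \exp\!\Bigl[-\tfrac{\log N}{6k} + O\bigl(k^2(\log k + \log_2 N)\bigr)\Bigr].
\]
I would then verify that, for $A$ sufficiently small and $k\le A(\log N/\log_2 N)^{1/3}$, the error term in the exponent is beaten by a definite fraction of $\log N/6k$. Concretely, since $\log k + \log_2 N \ll \log_2 N$, one has
\[
k^2(\log k+\log_2 N) \;\ll\; k^2 \log_2 N \;\ll\; \frac{k\cdot A^3 \log N}{k\cdot k\cdot \log_2 N}\cdot \log_2 N \;=\; \frac{A^3 \log N}{k},
\]
where the middle step uses $k^3 \le A^3 \log N/\log_2 N$. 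Choosing $A$ small enough that $O(A^3) \le 1/42$ yields
\[
-\frac{\log N}{6k} + O\bigl(k^2(\log k+\log_2 N)\bigr) \;\le\; -\frac{\log N}{7k},
\]
so the off-diagonal term is $\ll N^{-1/7k}$, as required.

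There is no real obstacle here beyond the parameter-chasing in the last paragraph: the substance lies in Lemmas 3.3, 3.5, 3.6. The only subtle point is that the error in Lemmas 3.5 and 3.6 carries an implicit absolute constant inside $O(k^2(\log k+\log_2 N))$; it is precisely to make this constant negligible compared to $1/6k - 1/7k = 1/42k$ that $A$ must be chosen small. Once $A$ is fixed so that this holds, combining the diagonal main term with the off-diagonal bound completes the proof.
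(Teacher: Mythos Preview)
Your proposal is correct and follows exactly the paper's approach: split $\mathbb{E}\,\mathfrak{m}_N^{(j,k)}$ via the point-count identity into diagonal and off-diagonal pieces, use Lemma~\ref{lem:diag_count} for the main term and Lemmas~\ref{lem:square_offdiag}--\ref{lem:circle_offdiag} for the error, then choose $A$ small so that $k^2(\log k+\log_2 N)\ll \tfrac{1}{42}\log N/k$ forces the off-diagonal bound below $N^{-1/7k}$. (Your lemma numbers are slightly off---the diagonal count is Lemma~\ref{lem:diag_count} in Section~2 and the off-diagonal bounds are Lemmas~\ref{lem:square_offdiag} and~\ref{lem:circle_offdiag}---but the content you invoke is exactly right.)
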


\subsection{The variance of moments: Gaussian range}

We define the count
$$
\Vcircle_{j,k}(N)= \sideset{}{^*}\sum_{\substack{ {\bf m}, {\bf m'} \in [1,N]^j \\ {\bf n}, {\bf n'} \in [1,N]^k}} 1
$$
where the asterisked sum above is restricted to tuples ${\bf m},{ \bf m'}, {\bf n}, {\bf n'}$ satisfying,
\begin{align}\label{Vsys}
\begin{split}
m_1 + \cdots + m_j = n_1 + \cdots + n_k&, \quad m_1\cdots m_j \neq n_1 \cdots n_k, \\
m_1' + \cdots + m_j' = n_1' + \cdots + n_k'&,\quad m_1'\cdots m_j' \neq n_1' \cdots n_k',\\
m_1\cdots m_j m_1'\cdots m_j' &= n_1\cdots n_k n_1' \cdots n_k'.
\end{split}
\end{align}

The reader can verify for $X(n)$ a Steinhaus random multiplicative function,
\begin{equation}\label{eq:Steinhaus_variance}
N^{j+k} \cdot \Var\Big( \int_0^1 (P_N(\theta))^j (\overline{P_N(\theta)})^k\, d\theta \Big) = \Vcircle_{j,k}(N).
\end{equation}

\begin{cor}
\label{prop:count_twofold}
Uniformly for all $1 \leq j \leq k$ and $N \geq 10$,
\begin{equation}\label{Vbound}
\Vcircle_{j,k}(N) \ll N^{j+k}\exp\Big[ - \frac{\log N}{12k} + O(k^2 (\log k + \log_2 N))\Big].
\end{equation}
\end{cor}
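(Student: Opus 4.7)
The plan is to embed $\Vcircle_{j,k}(N)$ into the count associated with the combined $(2j, 2k)$ Steinhaus system and then invoke Lemma \ref{lem:circle_offdiag}. Setting $\tilde{\mathbf m} := (m_1,\ldots,m_j, m_1',\ldots,m_j')$ and $\tilde{\mathbf n} := (n_1,\ldots,n_k, n_1',\ldots,n_k')$, adding the two sum equations in \eqref{Vsys}, and using the combined product equation shows that every solution of \eqref{Vsys} is also a solution of \eqref{eq:fundsystem_circles} with parameters $(2j, 2k)$. Hence
\[
\Vcircle_{j,k}(N) \leq \Ncircle_{2j,2k}(N) = \mathcal{D}_{2j,2k}(N) + \Ecircle_{2j,2k}(N),
\]
and I would split the estimate according to whether the combined tuple is off-diagonal or diagonal in the sense of \eqref{eq:fundsystem_circles}.

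For the off-diagonal part, Lemma \ref{lem:circle_offdiag} with parameters $(2j, 2k)$ immediately yields
\[
\Ecircle_{2j,2k}(N) \ll N^{j+k}\exp\!\Big[-\tfrac{\log N}{12k} + O(k^2(\log k + \log_2 N))\Big],
\]
which already matches the stated bound (after absorbing factors of $4$ into the $O$-term). It remains to control those tuples that are diagonal for \eqref{eq:fundsystem_circles} yet still satisfy the extra product-inequalities of \eqref{Vsys}. Any such tuple forces $j = k$ and can be parameterised by a pair $(\tilde{\mathbf n}, \pi)$ with $\pi \in S_{2j}$ and $\tilde m_r = \tilde n_{\pi^{-1}(r)}$. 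Writing $A := \pi^{-1}([j]) \subseteq [2j]$, the first product-inequality in \eqref{Vsys} translates into $\prod_{s\in A}\tilde n_s \neq \prod_{s\in[j]}\tilde n_s$, forcing $A \neq [j]$, and the first sum equation becomes the \emph{nontrivial} linear relation $\sum_{s\in A}\tilde n_s = \sum_{s\in[j]}\tilde n_s$. This admits at most $N^{2j-1}$ solutions in $\tilde{\mathbf n} \in [1,N]^{2j}$; summing over the at most $\binom{2j}{j}$ choices of $A$ and the $(j!)^2$ permutations $\pi$ compatible with each $A$ leaves a diagonal contribution of at most $(2j)!\,N^{2j-1}$, which is negligible compared to the off-diagonal bound for all $k \geq 1$ and $N$ sufficiently large.

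The main obstacle I anticipate is the diagonal bookkeeping: one must verify that the \emph{primed} sum equation and primed product-inequality are automatic in the diagonal case, given the combined product equation and the corresponding unprimed relations, so that no extra casework is required; and one must observe that the product-inequality is precisely what forces the sum equation to be nontrivial, thereby saving one factor of $N$. Once these two points are in place, the estimate reduces to a direct application of Lemma \ref{lem:circle_offdiag}.
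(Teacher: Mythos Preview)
Your approach is correct and in fact slightly more streamlined than the paper's. Both arguments embed $\Vcircle_{j,k}(N)$ into $\Ncircle_{2j,2k}(N)$ and handle the off-diagonal contribution via Lemma~\ref{lem:circle_offdiag}. For the diagonal contribution when $j=k$, the paper proceeds by induction on $k$: it peels off pairs $m_r=n_s$ and $m'_{r'}=n'_{s'}$ (reducing to $\Vcircle_{k-1}$), and only after assuming that either $\{m_r\}\cap\{n_s\}$ or $\{m'_r\}\cap\{n'_s\}$ is empty does it conclude that $\mathbf m$ permutes $\mathbf n'$ and obtain the $N^{2k-1}$ saving. You instead use the product inequality $m_1\cdots m_k\neq n_1\cdots n_k$ directly to force $A:=\pi^{-1}([k])\neq[k]$, which immediately makes the first sum equation a nontrivial linear relation in $\tilde{\mathbf n}$ and gives the same $N^{2k-1}$ saving in one step, with no induction.

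Your anticipated ``obstacle'' is not one: since you only need an \emph{upper} bound, you may simply discard the primed sum equation and primed product inequality when bounding the diagonal contribution; whether they are automatic is irrelevant. (For the record, they \emph{are} automatic: diagonality gives $\prod_{s\in A}\tilde n_s\cdot\prod_{s\notin A}\tilde n_s=\prod_{s\le k}\tilde n_s\cdot\prod_{s>k}\tilde n_s$ and $\sum_{s\in A}\tilde n_s+\sum_{s\notin A}\tilde n_s=\sum_{s\le k}\tilde n_s+\sum_{s>k}\tilde n_s$, so the unprimed relations imply the primed ones.) Finally, the bound $(2k)!\,N^{2k-1}$ is absorbed by the target $N^{2k}\exp[-\tfrac{\log N}{12k}+O(k^2(\log k+\log_2 N))]$ uniformly for $N\ge 10$, since $\log((2k)!/N)\le -\log N+O(k\log k)\le -\tfrac{\log N}{12k}+O(k^2\log k)$.
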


\begin{proof}
Let us first suppose that  $j \neq k$. Since each solution $({\bf m},{ \bf m'}, {\bf n}, {\bf n'})$ to \eqref{Vsys} may be combined to form a pair 
\begin{equation}\label{concat}
{\bf \underline{m} }=(m_1,...,m_j,m_1',...,m_j'), \ \ {\bf \underline{n} }=(n_1,...,n_k,n_1',...,n_k'),
\end{equation} 
it follows from Lemmas \ref{lem:diag_count} and \ref{lem:circle_offdiag} that
$$
\Vcircle_{j,k}(N) \leq \Ncircle_{2j,2k}(N) \ll N^{j+k} \exp\Big[ - \frac{\log N}{12k} + O(k^2 (\log k + \log_2 N))\Big],
$$
since for $2j \neq 2k$ all contributions to $\Ncircle_{2j,2k}(N)$ are off-diagonal.

We may now focus on the case $j=k$, once again arguing inductively with $k$. Writing  $\Vcircle_{k}(N) =\Vcircle_{k,k}(N)$, we begin by observing that $\Vcircle_{1}(N)=0$.

We make two simplifications. First, it will be sufficient to restrict our attention to solutions $({\bf m},{ \bf m'}, {\bf n}, {\bf n'})$ satisfying the following property: either $\{m_1,...,m_k\}$ and $\{n_1,...,n_k\}$ have no elements in common, or $\{m'_1,...,m'_k\}$ and $\{n'_1,...,n'_k\}$ have no elements in common. Indeed,  letting $\Wcircle_k(N)$ be the set of solutions for which this is not the case, by reducing the system by removing two equal pairs ($m_{r_1} = n_{s_1}$) and ($m'_{r_2} = n'_{s_2}$) from the solution, we see
$$
\Wcircle_k(N) \leq k^2 N^2 \Vcircle_{k-1}(N)
$$
and the quantity $\Vcircle_{k-1}(N)$ we will deal with inductively.

Second, we may also assume that the tuple ${\bf \underline{m} }=(m_1,...m_K,m'_1,...,m_K')$ is a permutation of  ${\bf \underline{n} }=(n_1,...n_K,n'_1,...,n_K')$ since if $\Ucircle_k(N)$ is the set of solutions for which this is not the case we have
\begin{equation}\label{e2kbound}
\Ucircle_k(N) \leq \Ecircle_{2k,2k}(N) \ll N^{2k} \exp\Big[ - \frac{\log N}{12k} + O(k^2 (\log k + \log_2 N))\Big]
\end{equation} 
by Lemma \ref{lem:circle_offdiag}.

Let $\widetilde{V}_{k}(N)$ denote the set of solutions satisfying both simplifications. Given any $({\bf m},{ \bf m'}, {\bf n}, {\bf n'}) \in \widetilde{V}_{k}(N)$, we see that ${\bf m}$ has to be a permutation of ${\bf n'}$ and similarly that ${ \bf m'}$ must be a permutation of ${\bf n}$. Thus
\begin{equation}\label{eq:simplification_bound}
|\widetilde{V}_{k}(N)| \leq 
(k !)^2 \sum_{\substack{ {\bf m}, {\bf m'} \in [1,N]^K \\ m_1 + \cdots + m_k = m'_1 + \cdots + m'_k }} 1 \leq k^{2k} N^{2k-1}.
\end{equation}
Hence
$$
\Vcircle_k(N) \ll k^{2k} N^{2k-1} + \Ucircle_k(N) + k^2 N^2 \Vcircle_{k-1}(N).
$$
Expanding inductively and using \eqref{e2kbound} we verify the claim.
\end{proof}

We also have an analogous count for Rademacher random multiplicative functions. Define
$$
\Vsquare_{j,k}(N) = \sideset{}{^*}\sum_{\substack{ {\bf m}, {\bf m'} \in [1,N]^j \\ {\bf n}, {\bf n'} \in [1,N]^k} } 1,
$$
where here the asterisked sum is restricted to ${\bf m},{ \bf m'}, {\bf n}, {\bf n'}$ satisfying,
\begin{align}\label{vsquaresys}
\begin{split}
m_1 + \cdots + m_j = n_1 + \cdots + n_k&,   \quad \ m_1\cdots m_j  n_1 \cdots n_k  \neq \square,\\
m_1' + \cdots + m_j' = n_1' + \cdots + n_k'&,    \quad \  m_1'\cdots m_j' n_1' \cdots n_k'  \neq \square,\\
m_1\cdots m_j m_1'\cdots m_j' & n_1\cdots n_k n_1' \cdots n_k'  =  \square.
\end{split}
\end{align}
Plainly for Rademacher random multiplicative functions, \eqref{eq:Steinhaus_variance} remains true with $\Vsquare_{j,k}(N)$ in place of $\Vcircle_{j,k}(N)$.

\begin{cor}
\label{prop:count_twofold_Sq}
Uniformly for all $1 \leq j \leq k$ and $N \geq 10$, we have the estimate
\begin{equation}\label{Vbound_square}
\Vsquare_{j,k}(N) \ll N^{j+k}\exp\Big[ - \frac{\log N}{12k} + O(k^2 (\log k + \log_2 N))\Big].
\end{equation}

\end{cor}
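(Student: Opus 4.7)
The plan is to follow the argument for Corollary \ref{prop:count_twofold} (the Steinhaus case) with modifications to handle the non-square conditions appearing in \eqref{vsquaresys}. First I would dispose of the case $j \neq k$: any solution to \eqref{vsquaresys} gives, via the concatenation \eqref{concat}, a tuple $({\bf \underline{m}}, {\bf \underline{n}}) \in [1,N]^{2j} \times [1,N]^{2k}$ satisfying \eqref{eq:fundsystem_squares} at parameters $(2j, 2k)$: the sum conditions add, and the full product $m_1\cdots m_j m_1'\cdots m_j' n_1\cdots n_k n_1'\cdots n_k'$ is a square by hypothesis. Since $2j \ne 2k$, there is no diagonal contribution, so $\Vsquare_{j,k}(N) \leq \Esquare_{2j, 2k}(N)$ and Lemma \ref{lem:square_offdiag} supplies the estimate.

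For $j = k$ I would induct on $k$, setting $\Vsquare_k(N) := \Vsquare_{k,k}(N)$. The base case $\Vsquare_1(N) = 0$ is immediate, since $m_1 = n_1$ forces $m_1 n_1 = m_1^2 = \square$, contradicting the non-square condition. For the inductive step I would mimic the three-part decomposition from the circle case. First, let $\mathcal{W}^{\,\square}_k(N)$ be those solutions where both $\{m_r\} \cap \{n_s\} \ne \emptyset$ and $\{m'_r\} \cap \{n'_s\} \ne \emptyset$. Removing a matching pair from each equation produces a valid $(k-1)$-solution, giving $\mathcal{W}^{\,\square}_k(N) \ll k^4 N^2 \Vsquare_{k-1}(N)$. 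Second, let $\mathcal{U}^{\,\square}_k(N)$ denote solutions for which ${\bf \underline{m}}$ is not a permutation of ${\bf \underline{n}}$; these form off-diagonal contributions to $\Nsquare_{2k, 2k}(N)$, hence $\mathcal{U}^{\,\square}_k(N) \leq \Esquare_{2k, 2k}(N)$ by Lemma \ref{lem:square_offdiag}. Finally, for the remaining solutions -- in neither $\mathcal{W}^{\,\square}_k(N)$ nor $\mathcal{U}^{\,\square}_k(N)$ -- symmetry lets me assume $\{m_r\} \cap \{n_s\} = \emptyset$, and combined with ${\bf \underline{m}}$ being a permutation of ${\bf \underline{n}}$, this forces $\{m_r\} = \{n'_s\}$ and $\{m'_r\} = \{n_s\}$ as multisets. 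The count then collapses to pairs $({\bf m}, {\bf m'}) \in [1,N]^{2k}$ with $\sum m_r = \sum m'_r$, bounded by $N^{2k-1}$, giving at most $(k!)^2 N^{2k-1} \leq k^{2k} N^{2k-1}$ after accounting for the $(k!)^2$ orderings of $({\bf n}, {\bf n'})$. Iterating the resulting inductive inequality then yields \eqref{Vbound_square}.

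The main technical subtlety -- the only real departure from the Steinhaus case -- is verifying that all three product constraints in \eqref{vsquaresys} survive the common-element reduction. This hinges on the observation that $m_{r_1} n_{s_1} = m_{r_1}^2$ and $m'_{r_2} n'_{s_2} = (m'_{r_2})^2$ are both perfect squares, so dividing either of the first two non-square expressions by the corresponding square preserves non-squareness, while dividing the global square product by $(m_{r_1} m'_{r_2})^2$ preserves squareness. Once this elementary point is noted, the remainder of the argument proceeds identically to the circle case.
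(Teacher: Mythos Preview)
Your argument is correct, but it takes a longer route than the paper. For $j\neq k$ you do exactly what the paper does. For $j=k$, however, the paper avoids redoing the induction: it splits the solutions to \eqref{vsquaresys} according to whether or not $m_1\cdots m_k\, m_1'\cdots m_k' = n_1\cdots n_k\, n_1'\cdots n_k'$. If equality holds, then since the non-square conditions in \eqref{vsquaresys} imply the inequality conditions in \eqref{Vsys}, these solutions are already counted by $\Vcircle_{k,k}(N)$, and Corollary~\ref{prop:count_twofold} applies directly. If equality fails, the concatenated pair $({\bf \underline{m}},{\bf \underline{n}})$ cannot be diagonal, so these are bounded by $\Esquare_{2k,2k}(N)$ via Lemma~\ref{lem:square_offdiag}. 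This two-line reduction replaces your entire inductive framework.

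What your approach buys is self-containment: you never appeal to the Steinhaus result, so in principle your argument could stand alone. The paper's approach buys brevity and conceptual clarity --- it makes explicit that the Rademacher variance is controlled by the Steinhaus variance plus an off-diagonal term, which is a useful structural observation. Your verification that the three constraints in \eqref{vsquaresys} survive the common-element reduction is correct and is indeed the only new ingredient your route requires beyond the Steinhaus template.
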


\begin{proof}
As in \eqref{concat}, each solution $({\bf m},{ \bf m'}, {\bf n}, {\bf n'})$ to \eqref{vsquaresys} may be concatenated to form a pair ${\bf \underline{m} }, {\bf \underline{n} }$. 
When $j \neq k$ we may apply Lemma \ref{lem:square_offdiag} to find that 
\begin{equation}\label{vsquarejk}
\Vsquare_{j,k}(N) \leq \mathcal{E}^{\square}_{2j,2k}(N) \ll N^{j+k}\exp\Big[ - \frac{\log N}{12k} + O(k^2 (\log k + \log_2 N))\Big]. 
\end{equation}
When $j=k$, we consider two sets of solutions to \eqref{vsquaresys}. First we count the number of solutions $({\bf m},{ \bf m'}, {\bf n}, {\bf n'})$ for which 
$m_1\cdots m_j m_1'\cdots m_j' = n_1\cdots n_k n_1' \cdots n_k'$. Invoking Corollary \ref{prop:count_twofold} these contribute no more than $N^{j+k}\exp[ - \log N/12k + O(k^2 (\log k + \log_2 N))]$ to $\Vsquare_{k,k}(N)$. The remaining solutions are counted in precisely the same manner as \eqref{vsquarejk} since we have removed all diagonal tuples $({\bf \underline{m} }, {\bf \underline{n} })$.
\end{proof}

From Corollaries \ref{prop:count_twofold} and \ref{prop:count_twofold_Sq}, as we did in Proposition \ref{prop:expectation_estimate}, we obtain the following estimate for the variance.

\begin{prop}\label{prop:variance_estimate}
For $X(n)$ a Rademacher or Steinhaus random multiplicative function, we have
$$
\Var(\mathfrak{m}_N^{(j,k)}) =  O\Big(\frac{1}{N^{1/15k}}\Big),
$$
uniformly for $1 \leq j \leq k \leq A\, (\log N / \log_2 N)^{1/3},$ for a sufficiently small absolute constant $A > 0$.
\end{prop}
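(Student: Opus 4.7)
The plan is to push the point-count bounds of Corollaries \ref{prop:count_twofold} and \ref{prop:count_twofold_Sq} through the variance-to-count identity already noted in \eqref{eq:Steinhaus_variance} (together with the remark following \eqref{vsquaresys} that gives the Rademacher analogue). Explicitly,
$$N^{j+k}\,\Var\bigl(\mathfrak{m}_N^{(j,k)}\bigr) \;=\; \Vcircle_{j,k}(N)\quad \text{or}\quad \Vsquare_{j,k}(N),$$
so after dividing by $N^{j+k}$, the two Corollaries yield uniformly in both cases
$$\Var\bigl(\mathfrak{m}_N^{(j,k)}\bigr) \;\ll\; \exp\Big[-\frac{\log N}{12k} + O\bigl(k^2(\log k + \log_2 N)\bigr)\Big].$$

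It then remains to verify that on the admissible range $1 \leq j \leq k \leq A\,(\log N/\log_2 N)^{1/3}$ the error term in the exponent is strictly smaller than the leading saving. Since $k \leq \log N$ forces $\log k \leq \log_2 N$, I will estimate
$$k^2(\log k + \log_2 N) \;\leq\; 2 k^2 \log_2 N \;=\; 2\,\frac{k^3 \log_2 N}{k} \;\leq\; \frac{2A^3\log N}{k},$$
using $k^3 \leq A^3 \log N/\log_2 N$. Choosing $A$ small enough that $2A^3$ times the implicit constant in the $O(\cdot)$ above is at most $1/12 - 1/15 = 1/60$, the exponent is bounded by $-\log N/(15k)$, and the conclusion $\Var(\mathfrak{m}_N^{(j,k)}) \ll N^{-1/(15k)}$ follows.

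I do not anticipate any genuine obstacle in this argument: all of the analytic work has already been carried out in the Corollaries, and the derivation here is essentially the second-moment analogue of the passage from Lemmas \ref{lem:square_offdiag} and \ref{lem:circle_offdiag} to Proposition \ref{prop:expectation_estimate}. The only subtle point is the bookkeeping in the preceding paragraph, which fixes how small $A$ needs to be so that the $k^2\log_2 N$-type error does not swamp the $\log N/(12k)$-type savings; this is precisely the origin of the exponent $1/15$ in the statement. Both cases of $X$ are handled in a single breath because Corollaries \ref{prop:count_twofold} and \ref{prop:count_twofold_Sq} have identical upper-bound shapes.
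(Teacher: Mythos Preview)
Your argument is correct and is precisely the approach the paper takes: the paper simply invokes \eqref{eq:Steinhaus_variance} (and its Rademacher analogue) together with Corollaries \ref{prop:count_twofold} and \ref{prop:count_twofold_Sq}, and then refers back to the calculation preceding Proposition \ref{prop:expectation_estimate} for the absorption of the $O(k^2(\log k + \log_2 N))$ term, which you have spelled out explicitly. The only difference is that you have made the bookkeeping (in particular the choice of $A$ via $1/12 - 1/15 = 1/60$) visible rather than leaving it implicit.
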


\subsection{Moments of short interval polynomials}

In bounding the mean of moments of sums $\sum_{n \in [M,M+L]} X(n) e(n\theta)$, we will use the following estimate.

\begin{lem}\label{lem:shortintervals}
Fix $\ell \geq 1$ and $\varepsilon > 0$. For any $N\geq 1$ and any collection of intervals $I_1, ..., I_\ell \subset [1,2N]$, we have
\begin{equation}\label{eq:count_shortint_Sq}
\sum_{\substack{\nu_1\cdots \nu_\ell = \square \\ \nu_r \in I_r}} 1 \ll_{\ell,\varepsilon} N^\varepsilon \prod_{r=1}^\ell (|I_r|^{1/2}+1) 
\end{equation}
\end{lem}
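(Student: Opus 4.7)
The plan is to factor each $\nu_r \in I_r$ uniquely as $\nu_r = s_r m_r^2$ with $s_r$ squarefree, turning the condition $\prod_r \nu_r = \square$ into $\prod_r s_r = \square$. The count to estimate becomes
\[
\sum_{\substack{s_1,\ldots,s_\ell \text{ squarefree} \\ \prod_r s_r = \square}} \prod_{r=1}^\ell g_r(s_r), \qquad g_r(s) := \#\{m \ge 1 : sm^2 \in I_r\}.
\]
The $\ell = 1$ case amounts to counting squares in an interval $J\subseteq [1,2N]$ and gives $g_r(s) \le (|I_r|/s)^{1/2}+1$.

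I would first handle the main term, restricting to $s_r \le |I_r|$ for every $r$. There $g_r(s_r) \le 2 |I_r|^{1/2}/\sqrt{s_r}$, and this part of the count is bounded by $\prod_r |I_r|^{1/2}$ times
\[
\sum_{\substack{s_r \text{ sqfree},\, s_r \le 2N \\ \prod_r s_r = \square}} \prod_r s_r^{-1/2}.
\]
The inner sum admits an Euler factorization: on squarefree $s_r$, the condition $\prod s_r = \square$ decouples prime by prime (each prime divides an even number of the $s_r$'s), and the sum equals $\prod_{p \le 2N} D_p$ with
\[
D_p = \tfrac{1}{2}\bigl[(1+p^{-1/2})^\ell + (1-p^{-1/2})^\ell\bigr] = 1 + \tbinom{\ell}{2}p^{-1} + O(p^{-2}).
\]
The truncated Euler product is therefore $O((\log N)^{\ell(\ell-1)/2}) = O_\ell(N^\varepsilon)$, matching the desired bound for the main term.

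For the error terms, indexed by non-empty $S \subseteq \{1,\ldots,\ell\}$ with $s_r > |I_r|$ for $r \in S$, the sharper estimate $g_r(s) \le 1$ combines with the elementary bound
\[
\sum_{\substack{s \text{ sqfree},\, s > |I_r| \\ g_r(s) \ge 1}} s^{-1/2} \le |I_r|^{1/2}+1,
\]
valid because there are at most $|I_r|+1$ such squarefree $s$ and each contributes at most $(|I_r|+1)^{-1/2}$. Coupled with a twisted version of the Euler product above for the indices $r \notin S$, each error term is again bounded by $N^\varepsilon \prod_r (|I_r|^{1/2}+1)$, so summing over the $2^\ell$ subsets $S$ finishes the proof.

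The principal technical obstacle will be handling the twisted Euler products: fixing $(s_r)_{r \in S}$ forces $\prod_{r \notin S} s_r$ to equal $c := \mathrm{sqfk}(\prod_{r \in S} s_r)$ modulo squares, and the Dirichlet series for the remaining variables has local factors of size roughly $c^{-1/2}$ at primes dividing $c$. These $c^{-1/2}$ contributions must be collated against the $\prod_{r \in S} s_r^{-1/2}$ weights without loss, to recover the clean square-root-per-interval estimate.
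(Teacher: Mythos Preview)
Your approach via the squarefree-kernel decomposition and Euler products is genuinely different from the paper's, which instead passes through the triangular-array factorization of Lemma~\ref{lem:triangular} and inducts on $\ell$ by peeling off the last column of the array. Your main-term analysis ($S=\emptyset$) is correct: the truncated Euler product $\prod_{p \le 2N} D_p$ with $D_p = \tfrac12[(1+p^{-1/2})^\ell + (1-p^{-1/2})^\ell]$ really is $O_\ell((\log N)^{\binom{\ell}{2}})$, giving the desired bound.

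The gap is precisely the obstacle you flag at the end, and it is not resolved by what you have written. Fixing $(s_r)_{r\in S}$, the twisted Euler product over the indices $r\notin S$ contributes a factor $\ll_{\ell,\varepsilon} N^\varepsilon c^{-1/2}$ with $c=\mathrm{sqfk}\bigl(\prod_{r\in S}s_r\bigr)$, but $c^{-1/2}$ is \emph{not} $\prod_{r\in S}s_r^{-1/2}$: writing $\prod_{r\in S}s_r = c\,d^2$ one has $c^{-1/2} = d\cdot \prod_{r\in S}s_r^{-1/2}$, and $d$ (which measures shared prime factors among the $s_r$, $r\in S$) can be large. Your elementary bound $\sum_{s>|I_r|,\,g_r(s)\ge 1} s^{-1/2}\le |I_r|^{1/2}+1$ therefore does not combine with the twist to give the claimed estimate without separately controlling the factor $d$, and you offer no mechanism for that. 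The case $S=\{1,\dots,\ell\}$ is even more problematic: there the set $\{r\notin S\}$ is empty, so there is no Euler product at all, and the contribution is simply the count of squarefree tuples $(s_r)$ with $\prod_r s_r=\square$, $s_r>|I_r|$, $g_r(s_r)\ge1$. Since $g_r(s_r)\le 1$ on this range, this is a subcount of the original sum you are trying to bound (restricted to $\nu_r$ with large squarefree kernel), and the argument is circular. Unwinding the condition ``$s_r$ squarefree, $\prod s_r=\square$'' combinatorially (e.g.\ for $\ell=3$ it forces $s_1=ab$, $s_2=ac$, $s_3=bc$ with $a,b,c$ pairwise coprime) leads one straight back to a parametrization equivalent to the paper's triangular array, so some structural input of that kind seems unavoidable.
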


\begin{proof}
Note that by Lemma \ref{lem:triangular} it will be sufficient to show
\begin{equation}\label{eq:triangular_shortint}
\sum_{\substack{\mathbf{b} \in T_\ell \\ b_r^\ast \in I_r}} 1 \ll_{\ell,\epsilon} N^\varepsilon \prod_{r=1}^\ell (|I_r|^{1/2}+1).
\end{equation}
We will prove the above estimate by induction.

For $\ell=1$ this is just the claim
$$
\sum_{b_{11}^2 \in I_1} 1 \ll N^\varepsilon (|I_1|^{1/2}+1),
$$
but for $I = (T-L,T]$ the left hand side is $ \lfloor \sqrt{T} \rfloor -\lfloor \sqrt{T-L} \rfloor \ll L/\sqrt{T}+1 \leq L^{1/2}+1,$ as claimed.

Suppose \eqref{eq:triangular_shortint} has been verified for $\ell-1$. Then for $\ell$, suppose without loss of generality $|I_\ell| \leq |I_1|, \, |I_2|,\, ... , |I_{\ell-1}|$. We have by inductive hypothesis
$$
\sum_{\substack{\mathbf{b} \in T_\ell \\ b_r^\ast \in I_r}} 1 = \sum_{\beta_1 \cdots \beta_{\ell-1} \beta_\ell^2 \in I_\ell} \sum_{\substack{\mathbf{b} \in T_{\ell-1} \\ \beta_r b_r^\ast \in I_r \\ \forall\, 1 \leq r \leq \ell-1}} 1 \ll \sum_{\beta_1 \cdots \beta_{\ell-1} \beta_\ell^2 \in I_\ell} \prod_{r=1}^{\ell-1} \Big(\frac{|I_r|^{1/2}}{\beta_r^{1/2}} + 1\Big) N^\varepsilon,
$$
where we have relabeled the right column of $T_\ell$ by $\beta_r$ in place of $b_{r \ell}$.

Note that the number of tuples satisfying $\beta_1 \cdots \beta_{\ell-1} \beta_\ell^2 \in I_\ell$ is $\ll_{\ell} (|I_\ell|+1)N^\varepsilon$ by the pointwise divisor bound \eqref{eq:divisor_bound}. If we expand the product above, getting $2^{\ell-1}$ terms, the contribution from any term in which $1$ appears instead of $|I_s|^{1/2}/\beta_s^{1/2}$ is thus
$$
\ll (|I_\ell|+1) N^\varepsilon \cdot \prod_{\substack{1 \leq r \leq \ell-1 \\ r\neq s}} (|I_r|^{1/2}+1) N^\varepsilon.
$$
As $|I_\ell| \leq |I_s|$ and $\varepsilon > 0$ is arbitrary, this yields an acceptable contribution to \eqref{eq:triangular_shortint}.

On the other hand, in expanding the product, the only term not considered above is
\begin{equation}\label{eq:induction_leftover}
\sum_{\beta_1 \cdots \beta_{\ell-1} \beta_\ell^2 \in I_\ell} \prod_{r=1}^{\ell-1} \frac{|I_r|^{1/2}}{\beta_r^{1/2}} N^\varepsilon = \sum_{\delta\beta_\ell^2 \in I_\ell} \frac{\tau_{\ell-1}(\delta)}{\delta^{1/2}} \prod_{r=1}^{\ell-1} |I_r|^{1/2} N^\varepsilon.
\end{equation}
If we show
\begin{equation}\label{eq:simple_bound}
\sum_{\delta \beta^2 \in I} \frac{1}{\delta^{1/2}} \ll (|I|^{1/2}+1) N^\varepsilon,
\end{equation}
for all intervals $I \subset [1,2N]$, we will be done, since then \eqref{eq:induction_leftover} therefore yields an acceptable contribution to \eqref{eq:triangular_shortint}, by the pointwise divisor bound.

But note that if $I \subset [N, 2N]$, we can split the sum in \eqref{eq:simple_bound} into two pieces: one in which $\delta > |I|$, which yields a contribution no larger than the right hand side by the divisor bound, and a second in which $\delta \leq |I|$, which yields a contribution no more than
$$
\sum_{\delta \leq |I|} \frac{1}{\delta^{1/2}} \sum_{b^2 \in I/\delta} 1 \ll \sum_{\delta \leq |I|} \frac{1}{\delta^{1/2}} \Big( \frac{|I|/\delta}{\sqrt{N/\delta}} + 1\Big) \ll |I|^{1/2} \log N.
$$
This yields \eqref{eq:simple_bound} if $I \subset [N,2N]$. But then summing dyadically (splitting $I$ up into pieces if necessary) gives \eqref{eq:simple_bound} for all $I \subset [1,2N]$.
\end{proof}

\section{Proofs of the main results}

\subsection{Gaussian moments and an almost sure central limit theorem}

\subsubsection{A proof of Theorem \ref{thm:main_moments_Gaussian}}

It does not take much more work to prove Theorem~\ref{thm:main_moments_Gaussian}. Indeed, using Propositions \ref{prop:expectation_estimate} and \ref{prop:variance_estimate}, note that in both the Rademacher and Steinhaus cases, we have from the triangle inequality,
$$
(\E\, |\mathfrak{m}_N^{(j,k)} - k! \mathbf{1}_{jk}|^2)^{1/2} \leq (\E\, |\mathfrak{m}_N^{(j,k)} - \E\, \mathfrak{m}_N^{(j,k)}|^2)^{1/2} + (\E\, |k! \mathbf{1}_{jk} - \E\, \mathfrak{m}_N^{(j,k)}|^2)^{1/2} \ll \frac{1}{N^{1/30k}},
$$
which implies the theorem.

\subsubsection{From asymptotically almost sure to almost sure}

Theorem \ref{thm:main_moments_Gaussian} shows that asymptotically almost surely the fixed moments of the polynomials $P_N(\theta)$ become Gaussian. In this section we will prove the almost sure central limit theorem, Theorem \ref{thm:Gaussian_RMF}, by repeatedly applying Borel-Cantelli type arguments to this result. We begin with a lemma.

\begin{lem}
\label{lem:sqrt_cancel}
Almost surely for all $p> 0$, $\varepsilon > 0$, 
and uniformly for $L \le M$,
\begin{equation}
\label{eq:sqrt_cancel}
\int_0^1 \Big|\sum_{M < n \leq M+L} X(n) e(n\theta) \Big|^p\, d\theta = O_{p,\varepsilon}(L^{p/2+\varepsilon} M^\varepsilon).
\end{equation}
\end{lem}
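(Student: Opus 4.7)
The plan is to prove the bound for even integer exponents $p = 2k$ in expectation via the point-counting input of Lemma~\ref{lem:shortintervals}, upgrade to an almost-sure uniform-in-$(M,L)$ statement by Markov and Borel--Cantelli, and finally interpolate to general $p$ via Jensen's inequality. Writing $S_{M,L}(\theta) = \sum_{M < n \le M+L} X(n) e(n\theta)$, the problem reduces to a standard probabilistic bootstrap from a single family of moment bounds.

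For the moment bound, expanding $|S_{M,L}(\theta)|^{2k}$ and integrating over $\theta \in [0,1]$ isolates tuples $(m_1,\dots,m_k,n_1,\dots,n_k) \in (M,M+L]^{2k}$ satisfying $m_1 + \cdots + m_k = n_1 + \cdots + n_k$. In both the Rademacher and Steinhaus models the expectation of the associated character product vanishes unless $m_1 \cdots m_k n_1 \cdots n_k = \square$, so discarding the sum constraint and applying Lemma~\ref{lem:shortintervals} with $\ell = 2k$ and all $I_r = (M, M+L]$ yields, for any $\delta > 0$,
$$
\mathbb{E} \int_0^1 |S_{M,L}(\theta)|^{2k}\, d\theta \,\ll_{k,\delta}\, M^\delta (L^{1/2}+1)^{2k} \,\ll_{k,\delta}\, L^k M^\delta,
$$
the bound being trivial when $L = 0$.

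By Markov's inequality with threshold $L^{k+2} M^2$ (taking $\delta = 1/2$),
$$
\mathbb{P}\Big( \int_0^1 |S_{M,L}(\theta)|^{2k}\, d\theta > L^{k+2} M^2 \Big) \,\ll_k\, L^{-2} M^{-3/2},
$$
which is summable over integer pairs $(M,L)$ with $1 \le L \le M$. Borel--Cantelli, together with a countable union over $k \in \mathbb{N}$, then shows that almost surely there exists a random constant $C_k = C_k(\omega) < \infty$ such that $\int_0^1 |S_{M,L}|^{2k}\, d\theta \le C_k L^{k+2} M^2$ for every integer pair $1 \le L \le M$ and every $k \ge 1$. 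For arbitrary $p, \varepsilon > 0$, one chooses an integer $k \ge \max(p/2,\, p/\varepsilon)$ and applies Jensen's inequality for Lebesgue measure on $[0,1]$ to obtain
$$
\int_0^1 |S_{M,L}|^p\, d\theta \,\le\, \Big( \int_0^1 |S_{M,L}|^{2k}\, d\theta \Big)^{p/(2k)} \,\le\, C_k^{p/(2k)}\, L^{p/2 + p/k}\, M^{p/k} \,\ll_{p,\varepsilon,\omega}\, L^{p/2+\varepsilon} M^\varepsilon,
$$
which is the desired bound, as $L \ge 1$ and $p/k \le \varepsilon$. The passage from integer $(M,L)$ to real parameters is immediate since the sum only depends on $\lceil M \rceil$ and $\lfloor M+L \rfloor$.

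The substantive input is the moment bound in the second paragraph: Lemma~\ref{lem:shortintervals} is essential here, since bounding $2k$ variables in $(M, M+L]$ using only the linear sum constraint would yield roughly $L^{2k-1}$ rather than the crucial $L^k$. The remaining steps are routine, with the only calibration being the choice of Markov threshold $L^{k+2} M^2$, polynomial in both variables, chosen so that $\sum_M \sum_{L \le M}$ converges while a sufficiently large $k$ in the Jensen step still recovers the near-sharp exponent $L^{p/2+\varepsilon} M^\varepsilon$.
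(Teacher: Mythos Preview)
Your proof is correct and follows essentially the same approach as the paper: both obtain the $2k$-th moment bound $\mathbb{E}\int_0^1 |S_{M,L}|^{2k}\,d\theta \ll_{k,\delta} L^k M^\delta$ from Lemma~\ref{lem:shortintervals}, apply a Borel--Cantelli argument over integer pairs $(M,L)$, and pass to general $p$ by H\"older/Jensen. The only cosmetic difference is that the paper fixes $(p,\varepsilon)$ first and pushes up to a suitable even exponent $2k$, whereas you first secure the almost-sure bound for all even $2k$ and then interpolate down; the substantive content is identical.
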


\begin{proof}
By H\"older's inequality, if \eqref{eq:sqrt_cancel} is true for some exponent $p$, it will be true for all smaller exponents $p$, and plainly if it is true for some $\varepsilon$, it remains true for all larger $\varepsilon$. Therefore by examining any countable collection of $p\rightarrow\infty$ and $\varepsilon \rightarrow 0$, it will be sufficient to prove that for any fixed $p$ and $\varepsilon$, \eqref{eq:sqrt_cancel} is true almost surely. 

Now, we note that Lemma \ref{lem:shortintervals} immediately implies that for all $k\geq 1$ and $L \leq M$,
\begin{equation}
\label{eq:2k_expint}
\EE \int_0^1 \Big| \sum_{M < n \leq M+L} X(n) e(n\theta)\Big|^{2k}\, d\theta = O_{k,\varepsilon}( L^k M^\varepsilon)
\end{equation}

Fix $p$ and $\varepsilon$, and let $\mathcal{A}$ be the event that for infinitely many $M, L$,
$$
\int_0^1 \Big| \sum_{M < n \leq M+L} X(n) e(n\theta)\Big|^p \, d\theta \geq L^{p/2+\varepsilon} M^\varepsilon.
$$
To prove the lemma, we need only to show that $\mathcal{A}$ is null.

But on $\mathcal{A}$, we see by H\"older that for all $t\geq 1$,
$$
\int_0^1 \Big| \sum_{M < n \leq M+L} X(n) e(n\theta)\Big|^{pt} \, d\theta \geq L^{pt/2+\varepsilon t} M^{\varepsilon t},
$$
for infinitely many $M, L$. We choose $t$ large enough that $\varepsilon t \geq 2+\varepsilon$ and also take $pt$ an even integer $2k$. Then on the one hand from \eqref{eq:2k_expint}
$$
\EE \sum_{M,L\geq 1} \frac{1}{L^{pt/2 + \varepsilon t} M^{\varepsilon t}} \int_0^1 \Big| \sum_{M < n \leq M+L} X(n) e(n\theta) \Big|^{pt}\, d\theta \leq \sum_{M,L\geq 1} \frac{1}{L^{2} M^2} < +\infty.
$$
But on the other hand the sum here is infinite on $\mathcal{A}$, so $\mathcal{A}$ must be a null event.
\end{proof}

We now turn to the almost sure central limit theorem.

\begin{proof}[Proof of Theorem \ref{thm:Gaussian_RMF}]
We use the method of moments: we will show that for all $j,k \geq 1$, almost surely
\begin{equation}
\label{eq:as_moments}
\mathfrak{m}_N^{(j,k)} = \mathbf{1}_{jk}\cdot k! + o(1),
\end{equation}
as $N\rightarrow\infty$. Since these are the moments of a standard complex normal random variable, the theorem will follow (see e.g. \cite[p. 388]{billingsley}).

To establish \eqref{eq:as_moments}, fix arbitrary $1 \leq j \leq k$ and let $\lambda \in \N$ be such that $(\lambda+1)/15k > 1$. Then by Proposition \ref{prop:variance_estimate},
$$
\sum_{M=1}^\infty \Var\big(\mathfrak{m}^{(j,k)}_{M^{\lambda+1}}\big) < +\infty,
$$
and so almost surely,
\begin{equation}
\label{eq:as_moments_subseq}
\mathfrak{m}^{(j,k)}_{M^{\lambda+1}} = \EE \mathfrak{m}^{(j,k)}_{M^{\lambda+1}}+ o(1) = \mathbf{1}_{jk}\cdot k! + o(1),
\end{equation}
as $M\rightarrow\infty$, with the last equation following from Proposition \ref{prop:expectation_estimate}. Note that this implies in particular almost surely, for all $p > 0$,
$$
\int_0^1 |P_{M^{\lambda+1}}(\theta)|^p \, d\theta = O_p(1).
$$

It remains to pass from the sequence $\{M^{\lambda+1}\}$ to the integers. We note that if we take $N$ with $M^{\lambda+1} \leq N < (M+1)^{\lambda+1}$, then $N = M^{\lambda+1} + L$, for $L = O(M^\lambda)$.

Write
$$
\sqrt{N} P_N(\theta) = M^{(\lambda+1)/2} P_{M^{\lambda+1}}(\theta) + \sum_{M^{\lambda+1} < n \leq N} X(n) e(n\theta) \eqqcolon F + f,
$$
and observe that for all $X$ and $\theta$, by a binomial expansion,
$$
(F+f)^j \overline{(F+f)}^k = F^j \overline{F}^k + O_{j,k}\Big(\sum_{\substack{a\leq j, b\leq k \\ (a,b) \neq (0,0)}} |F|^{(j-a) + (k-b)} |f|^{a+b}\Big).
$$
We seek to integrate the error term in $\theta$; note that for any $p_1 p_2 \geq 0$,
$$
\int_0^1 |F|^{p_1} |f|^{p_2} \, d\theta 
\leq \Big(\int_0^1 |F|^{2p_1}\, d\theta \Big)^{1/2} \Big(\int_0^1 |f|^{2p_2}\, d\theta \Big)^{1/2}.
$$
But from our almost sure computation of moments along the sequence $\{M^{\lambda+1}\}$ we have that almost surely
$$
\int_0^1 |F|^p\, d\theta = O_p(M^{p (\lambda+1)/2}) = O_p(N^{p/2}),
$$
for any $p > 0$. Furthermore, because $L = O(M^\lambda)$ and $N > M^{\lambda+1}$, applying Lemma \ref{lem:sqrt_cancel} with $\varepsilon < 1/4(\lambda + 1)$ we find that
$$
\int_0^1 |f|^p\, d\theta = O_{p,\lambda}(N^{2\varepsilon} N^{(p/2)(1-1/(\lambda+1))}) = o(N^{p/2}),
$$
for any $p > 0$.

Therefore almost surely
$$
\int_0^1 (F+f)^j \overline{(F+f)}^k \, d\theta = \int_0^1 F^j \overline{F}^k\, d\theta + o(N^{(j+k)/2}).
$$
But dividing by $N^{(j+k)/2}$  applying \eqref{eq:as_moments_subseq}, we see that \eqref{eq:as_moments} holds as claimed.
\end{proof}

\subsection{Estimates for the sup-norm}\label{sec:supremum}

Before proving Theorem \ref{thm:supnorm} we need the following simple lemma, which tells us the sup-norm of a degree $N$ polynomial is characterized by $L^p$ norms for $p$ of order at least $\log N$, and gives less precise bounds in the case of smaller $p$.

\begin{lem}\label{lem:Lp_to_sup}
For an arbitrary trigonometric polynomial $p_N(\theta) = \sum_{n\leq N} a_n e(n\theta)$ of degree $N$,
\begin{equation}\label{eq:Lp_to_sup}
\Big(\int_0^1 |p_N(\theta)|^{2k}\, d\theta\Big)^{1/2k}\leq \max_\theta|p_N(\theta)| \ll \Big( N \int_0^1 |p_N(\theta)|^{2k}\,d\theta \Big)^{1/2k},
\end{equation}
for all $k\geq 1$, where the implied constant is absolute
\end{lem}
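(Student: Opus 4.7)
\textbf{Proof proposal for Lemma \ref{lem:Lp_to_sup}.} The left inequality $\|p_N\|_{2k} \leq \|p_N\|_\infty$ is immediate, since $[0,1]$ has unit Lebesgue measure and so $L^p$-norms are monotone increasing in $p$. The real content of the lemma is the right inequality, which is a special case of Nikolskii's inequality for trigonometric polynomials.

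The approach I would take is via Bernstein's inequality. Since $p_N(\theta) = \sum_{n\leq N} a_n e(n\theta)$ has frequency spectrum contained in $[0,N]$, one may write $p_N(\theta) = e(N\theta/2)\tilde{p}(\theta)$ where $\tilde p$ has spectrum symmetric about $0$ with bandwidth $N/2$, so that classical Bernstein yields $\|p_N'\|_\infty \leq 2\pi N \|p_N\|_\infty$ (with an absolute implied constant; the exact constant is not essential). Let $M = \|p_N\|_\infty$ and choose $\theta^\ast$ with $|p_N(\theta^\ast)| = M$. By the mean value theorem, for all $\theta$ with $|\theta-\theta^\ast| \leq 1/(4\pi N)$,
$$
|p_N(\theta)| \geq M - 2\pi N \cdot M \cdot \tfrac{1}{4\pi N} = M/2.
$$

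Integrating over this interval (understood modulo $1$), I would conclude
$$
\int_0^1 |p_N(\theta)|^{2k}\, d\theta \,\geq\, \frac{1}{2\pi N} \cdot \Big(\frac{M}{2}\Big)^{2k},
$$
so that $M^{2k} \leq 2\pi \cdot 4^k \cdot N \int_0^1 |p_N(\theta)|^{2k}\, d\theta$. Taking $2k$-th roots and noting $(2\pi)^{1/2k} \leq (2\pi)^{1/2}$ for $k\geq 1$, one obtains $M \ll (N \int_0^1 |p_N|^{2k})^{1/2k}$ with an absolute constant, as desired.

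The only real obstacle is the invocation of Bernstein's inequality, which is classical and may simply be cited; alternatively one could give a brief self-contained derivation using Fejér's interpolation formula or a direct extremal argument. The factor $4^k$ that appears when taking the $2k$-th root is harmless because it contributes $4^{k/(2k)} = 2$, an absolute constant. No issues arise for $k = 1$ or for polynomials with very concentrated sup-norm, since Bernstein's bound on the derivative is sharp precisely up to such a constant.
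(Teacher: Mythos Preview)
Your proof is correct and follows essentially the same route as the paper: both invoke Bernstein's inequality to bound $\|p_N'\|_\infty \le 2\pi N \|p_N\|_\infty$, deduce that $|p_N| \ge H/2$ on an interval of length $\asymp 1/N$ around the maximizer, and integrate. Your bookkeeping with the interval $|t|\le 1/(4\pi N)$ is in fact slightly more careful than the paper's, and the detour through the shifted polynomial $\tilde p$ is unnecessary (Bernstein applies directly since the frequencies already lie in $[-N,N]$), but the argument is the same.
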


\begin{proof}
Let $H = \max_\theta |p_N(\theta)|$. The lower bound in \eqref{eq:Lp_to_sup} is obvious. For the upper bound, note that Bernstein's inequality (see \cite[Ex 7.16]{katznelson}) implies $|p_N'(\theta)| \leq 2\pi N H$. Thus if $|p_N(\theta)|$ achieves its maximum for $\theta = \theta^\ast$, then $|p_N(\theta^\ast + t)| \geq H - 2\pi NH |t|$, so
$$
|p_N(\theta^\ast + t)| \geq H/2, \quad \textrm{for} \; |t| \leq 1/2\pi N.
$$
This immediately implies the upper bound in \eqref{eq:Lp_to_sup}.
\end{proof}

We now turn to:

\begin{proof}[Proof of Theorem \ref{thm:supnorm}]
We will prove the lower bound first. We may take $X(n)$ either a Rademacher or Steinhaus random multiplicative function. Note by Chebyshev's inequality,
$$
\mathbb{P}\left(|\mathfrak{m}_N^{(k)}-k!| \,\geq\, k!/2 \right) \leq \frac{\E\, \left| \mathfrak{m}_N^{(k)} - k! \right|^2}{(k!)^2/4} = o(1),
$$
for any choice of $k \leq A\,(\log N/\log_2 N)^{1/3}$ for a small absolute constant $A$. Thus $\mathfrak{m}_N^{(k)} \geq k!/2$ with probability $1-o(1)$, for any choice of $k$ in this range. Hence using the lower bound in Lemma \ref{lem:Lp_to_sup},
$$
H \coloneqq \max_\theta |P_N(\theta)| \geq (\mathfrak{m}_N^{(k)})^{1/2k} \geq (k!/2)^{1/2k} \gg \sqrt{k},
$$
with probability $1-o(1)$ for any choice of $k$ in this range. Using Stirling's formula and taking $k$ to be of order $(\log N/\log_2 N)^{1/3}$ yields the lower bound in the theorem.

For the upper bound, if $X(n)$ is a Rademacher random multiplicative function,
\begin{multline*}
\E\, \int_0^1 |P_N(\theta)|^{2k}\, d\theta = \frac{1}{N^k} \Nsquare_{k,k}(N) \leq \frac{1}{N^k} \sum_{\substack{m_1\cdots m_k n_1\cdots n_k = \square \\ m_r, n_s \leq N}} 1 \\
\leq \frac{1}{N^k} \sum_{n \leq N^k} \tau_{2k}(n^2) \leq (2k \log N)^{4k^2-1},
\end{multline*}
using \eqref{eq:tau_of_squares} in the final step. It is easy to see in the same way that if $X(n)$ is instead a Steinhaus random multiplicative function the same bound is satisfied (in fact a slightly better bound is satisfied)

Thus from Lemma \ref{lem:Lp_to_sup}
$$
\E\, H^{2k} \leq C^{2k} N (2k)^{4k^2-1} (\log N)^{4k^2-1},
$$
Hence for all $\lambda > 0$ and all integers $k \geq 1$,
$$
\PP (H \geq \lambda) \leq \frac{C^{2k} N (2k)^{4k^2-1} (\log N)^{4k^2-1}}{\lambda^{2k}}.
$$
We approximately optimize the right-hand side in $k$ by setting $k = \lfloor \log \lambda / 6 \log_2 N \rfloor$, and then we set $\lambda = \exp(3 \sqrt{\log N \log_2 N})$ to see that for such $\lambda$, 
$$
\PP (H \geq \lambda) \rightarrow 0.
$$
This proves the claim.
\end{proof}

\subsection{Large mean and variance}

In this section we prove Theorem \ref{thm:main_moments_larger}. We first recall a recent result estimating moments of sums of random multiplicative functions.

\begin{thm}[Harper]\label{thm:harper_highmoments}
There is a small positive absolute constant $c$ such that the following holds. For $X(n)$ a Steinhaus random multiplicative function, for all $1 \leq \ell \leq c \log N / \log_2 N$,
\begin{equation}\label{eq:steinhaus_high_moments}
\mathbb{E} \, \Big| \sum_{n\leq N} X(n) \Big|^{2\ell} = N^\ell \exp(-\ell^2 \log \ell - \ell^2 \log_2(2 \ell) + O(\ell^2)) (\log N)^{(\ell-1)^2}.
\end{equation}
For $X(n)$ a Rademacher random multiplicative function, for all $2 \leq \ell \leq c \log N / \log_2 N$,
\begin{equation}\label{eq:rad_high_moments}
\mathbb{E} \, \Big| \sum_{n\leq N} X(n) \Big|^{2\ell} = N^\ell \exp(-2 \ell^2 \log \ell - 2\ell^2 \log_2(2 \ell) + O(\ell^2)) (\log N)^{2 \ell^2+O(\ell)}.
\end{equation}

\begin{proof}
The estimate \eqref{eq:steinhaus_high_moments} is just a restatement of of Harper \cite[Thm. 1]{harper_high}. \eqref{eq:rad_high_moments} is very nearly \cite[Thm.2]{harper_high}, except the result there actually pertains to Rademacher random multiplicative functions supported on squarefree integers. However, in our setting, we may recover the estimate \eqref{eq:rad_high_moments} by way of H\"older's inequality. Writing $\flat$ to indicate a summation over squarefree integers, we see that
\begin{align*}
\Big| \sum_{n\leq N} X(n) \Big|^{2\ell} =&\Big| \sum_{r^2 \leq N} \sum_{n \leq N/r^2}^{\flat} X(n) \Big|^{2 \ell} 
\leq (\log N)^{2 \ell} \max_{\substack{D \leq N^{1/2}\\ \text{dyadic} }}   \Big| \sum_{r \in [D,2D]} \sum_{n \leq N/r^2}^{\flat} X(n) \Big|^{2 \ell}\\
&\leq  (\log N)^{2 \ell} \sum_{\substack{D \leq N^{1/2} \\ \text{dyadic} }} D^{2 \ell -1}   \sum_{r \in [D,2D]}  \Big|\sum_{n \leq N/r^2}^{\flat} X(n) \Big|^{2 \ell}.
\end{align*}
We have used H\"older's inequality in the last step, and then replaced the maximum with a sum. Taking expectations, we obtain the desired upper bound. The matching lower bound follows upon noting that our $2\ell$-th moments, expanded into lattice point counts, are larger than those in the squarefree Rademacher setting. 
\end{proof}
\end{thm}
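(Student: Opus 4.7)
The plan is to deduce both estimates from Harper's recent work on high moments of random multiplicative functions \cite{harper_high}. The Steinhaus bound \eqref{eq:steinhaus_high_moments} is precisely Harper's Theorem~1, so nothing further is needed beyond invoking it. The substance lies in adapting Harper's Rademacher result, which is stated there for random multiplicative functions supported on squarefree integers, to our convention in which $X(n)$ is extended completely multiplicatively to all positive integers.

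For the upper bound in \eqref{eq:rad_high_moments}, I would exploit the Rademacher identity $X(r)^2 = 1$, which immediately gives $X(n) = X(n_0)$ whenever $n = r^2 n_0$ with $n_0$ squarefree. This factorization yields
$$
\sum_{n \le N} X(n) \;=\; \sum_{r^2 \le N} \sum_{n_0 \le N/r^2}^{\flat} X(n_0),
$$
where $\flat$ restricts to squarefree integers. From here I would split the outer $r$-sum into $O(\log N)$ dyadic ranges $r \in [D, 2D]$, bound the $2\ell$-th power of the full sum by $(\log N)^{2\ell}$ times the maximum over blocks via the triangle inequality, apply H\"older's inequality inside each block (costing a factor $D^{2\ell-1}$), take expectations, and invoke Harper's squarefree Rademacher bound on each resulting inner moment of scale $N/r^2$. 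The geometric sum in $D$ then collapses, the powers of $D$ telescope, and one obtains the claimed estimate.

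For the matching lower bound, no substantive work is required: expanding $|\sum_{n \le N} X(n)|^{2\ell}$ and taking expectations produces a non-negative lattice point count which, term-by-term, dominates the analogous count in the squarefree setting (it simply contains additional summands). Harper's lower bound for the squarefree case thus transfers immediately.

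The main technical point requiring care is confirming that the several $(\log N)^{O(\ell)}$ losses incurred along the way — from the $O(\log N)$ dyadic blocks, triangle inequality, and H\"older — are absorbed into the $(\log N)^{2\ell^2 + O(\ell)}$ factor present in Harper's expression. In the range $\ell \le c \log N / \log_2 N$ this is automatic, since $(\log N)^{O(\ell)}$ is subdominant to a $(\log N)^{\ell^2}$-scale main term. A secondary subtlety is that when $r$ approaches $\sqrt{N}$ the inner length $N/r^2$ can become too small for Harper's asymptotic to apply directly, but in that regime the contribution is negligible under the most crude second-moment estimates, so this boundary case does not obstruct the argument.
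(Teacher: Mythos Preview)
Your proposal is correct and follows essentially the same route as the paper: both cite Harper's Theorem~1 for the Steinhaus case, and for the Rademacher case both use the squarefree decomposition $\sum_{n\le N}X(n)=\sum_{r^2\le N}\sum^\flat_{n_0\le N/r^2}X(n_0)$, dyadically block the $r$-sum, apply the triangle inequality and H\"older to reduce to Harper's squarefree estimate for the upper bound, and observe that the lattice-point count dominates the squarefree one for the lower bound. Your remarks on absorbing the $(\log N)^{O(\ell)}$ losses and on the boundary range $r\approx\sqrt{N}$ are accurate elaborations that the paper leaves implicit.
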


\begin{rmk}
These estimates hold for all $\ell$, though we only need them for integer $\ell$. Harper actually gives a formula for \eqref{eq:rad_high_moments} in a slightly larger range $1\leq \ell \leq c \log N / \log_2 N$, with a somewhat more complicated right hand side (involving a phase change at $\ell = (1+\sqrt{5})/2 \approx 1.618$), but we will not need this.
\end{rmk}

We now prove Theorem \ref{thm:main_moments_larger}, treating the mean-value estimate \eqref{eq:moment_mean_larger}, the lower bound \eqref{eq:moment_mean_lowerbound}, and the variance estimate \eqref{eq:moment_variance_larger}  separately.

\begin{proof}[Proof of \eqref{eq:moment_mean_larger} of Theorem \ref{thm:main_moments_larger}]
We first treat the case that $X(n)$ is a Steinhaus random multiplicative function. We will find upper and lower bounds for the quantity $\E\, \mathfrak{m}_N^{(k)}$. 

For the upper bound, note by examining the point counts corresponding to the left- and right- hand side, we have
\begin{equation}\label{eq:add_to_mult_comp}
\E\, \int_0^1 |P_N(\theta)|^{2k}\, d\theta \leq \E\, \Big| \frac{1}{\sqrt{N}} \sum_{n\leq N} X(n) \Big|^{2k}.
\end{equation}

For the lower bound, note
%\begin{align*}
%\E\, \int_0^1 |P_N(\theta)|^{2k}\, d\theta & \geq   \E\, \int_{[0,N^{-3/2}]}|P_N(\theta)|^{2k}\, d\theta  \\
%&= \frac1{N^k} \int_{[0,N^{-3/2}]}  \sum_{\substack{m_r, n_s \in [1,N] \\ m_1\cdots m_k = n_1\cdots n_k}} e([(m_1+\cdots+m_k) - (n_1+\cdots+n_k)]\theta)\,d\theta\\
%&\geq \frac{1}{N^{k + 3/2}}  \sum_{\substack{m_r, n_s \in [1,N] \\ m_1\cdots m_k = n_1\cdots n_k}} \Big( 1 - O\Big(\frac{k}{N^{1/2}}\Big)\Big)
%\end{align*}

\begin{align*}
\E\, & \int_0^1 |P_N(\theta)|^{2k}\, d\theta  \geq   \E\, \int_{[0,N^{-3/2}]}|P_N(\theta)|^{2k}\, d\theta  \\
&= \frac1{N^k} \int_{[0,N^{-3/2}]}  \sum_{\substack{m_r, n_s \in [1,N] \\ m_1\cdots m_k = n_1\cdots n_k}} e([(m_1+\cdots+m_k) - (n_1+\cdots+n_k)]\theta)\,d\theta\\
&\geq \frac{1}{N^{k + 3/2}}  \sum_{\substack{m_r, n_s \in [1,N] \\ m_1\cdots m_k = n_1\cdots n_k}} \Big( 1 - O\Big(\frac{k}{N^{1/2}}\Big)\Big)
\end{align*}
using a pointwise bound of the (positive) integrand in $\theta$ to arrive at the last line. For $k \leq N^{1/4}$ and sufficiently large $N$ this gives
\begin{equation}\label{eq:lower_add_to_mult_comp}
\E\, \int_0^1 |P_N(\theta)|^{2k}\, d\theta \gg \frac{1}{N^{3/2}} \E\, \Big| \frac{1}{\sqrt{N}} \sum_{n\leq N} X(n) \Big|^{2k}
\end{equation}
By possibly adjusting the implicit constant and using compactness it follows that this result is true for all $N$.

\eqref{eq:add_to_mult_comp} and \eqref{eq:lower_add_to_mult_comp} then give \eqref{eq:moment_mean_larger}.

The case for $X(n)$ a Rademacher random multiplicative function is the same, replacing a sum over $m_1\cdots m_k = n_1 \cdots n_k$ with a sum over $m_1\cdots m_k n_1 \cdots n_k = \square$.
\end{proof}

\begin{proof}[Proof of \eqref{eq:moment_mean_lowerbound} of Theorem \ref{thm:main_moments_larger}]
Let $X(n)$ be either a Steinhaus or Rademacher random multiplicative function. We claim that for any $\delta > 0$, that if $N$ is sufficiently large, $C_\delta$ is a sufficiently large constant, and  $C_\delta (\log N/\log_2 N)^{1/2} \leq k$, then
\begin{equation}\label{eq:log_lowerbound}
\log N \leq \delta \log \E\, \Big| \frac{1}{\sqrt{N}} \sum_{n\leq N} X(n)\Big|^{2k}.
\end{equation}

This follows directly from Theorem \ref{thm:harper_highmoments} if $k = k_0 = \lceil C_\delta (\log N/\log_2 N)^{1/2} \rceil$ as long as $C_\delta$ and $N$ are chosen large enough. But note for $k \geq k_0$
$$
\log \E\, \Big| \frac{1}{\sqrt{N}} \sum_{n\leq N} X(n)\Big|^{2k_0} \leq \frac{2k_0}{2k} \log \E\, \Big| \frac{1}{\sqrt{N}} \sum_{n\leq N} X(n)\Big|^{2k} \leq \log \E\, \Big| \frac{1}{\sqrt{N}} \sum_{n\leq N} X(n)\Big|^{2k},
$$
using H\"older's inequality to deduce the first inequality. This implies that \eqref{eq:log_lowerbound} is true for all $k$ as claimed.

\eqref{eq:moment_mean_lowerbound} then follows from \eqref{eq:moment_mean_larger} and \eqref{eq:log_lowerbound}, choosing $\delta$ based on $\varepsilon$ and the implicit constant in \eqref{eq:moment_mean_larger}.
\end{proof}

In order to treat the variance estimate \eqref{eq:moment_variance_larger} we use the following lemma.

\begin{lem}\label{lem:var_lowerbound}
Let $X(n)$ be a Rademacher or Steinhaus random multiplicative function and suppose that $k\leq N^{1/4}$. We have the lower bound
\begin{equation}\label{vartransfer}
N^{2k} \cdot \Var(\mathfrak{m}_N^{(k)})  \geq N^{-3} \ \E \Big|\sum_{n \leq N} X(n) \Big|^{4k} - \left( \E \Big|\sum_{n \leq N} X(n) \Big|^{2k} \right)^2.
\end{equation}
\end{lem}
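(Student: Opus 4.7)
The plan is to split the variance as
\[
N^{2k}\Var(\mathfrak{m}_N^{(k)}) = N^{2k}\,\E\,(\mathfrak{m}_N^{(k)})^2 - N^{2k}(\E\,\mathfrak{m}_N^{(k)})^2
\]
and control the two pieces separately. The squared mean is easy: $N^k\E\,\mathfrak{m}_N^{(k)}$ equals $\Ncircle_{k,k}(N)$ or $\Nsquare_{k,k}(N)$ depending on the case, and each of these counts the tuples $(m,n) \in [1,N]^{2k}$ satisfying both the multiplicative-type identity (from \eqref{eq:fundsystem_squares} or \eqref{eq:fundsystem_circles}) and the additive constraint $\sum_r m_r = \sum_s n_s$. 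Dropping the additive constraint produces a larger count equal to $\E|\sum_{n \le N} X(n)|^{2k}$ in both cases, so $N^k\E\,\mathfrak{m}_N^{(k)} \le \E|\sum_{n\le N} X(n)|^{2k}$ and after squaring this supplies the subtracted term on the right side of \eqref{vartransfer}.

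For the second moment, I would exploit the non-negativity of $|P_N|^{2k}$ and restrict the $\theta$-integration to the short interval $I = [0, N^{-3/2}]$:
\[
\E(\mathfrak{m}_N^{(k)})^2 \;\ge\; \E\Bigl(\int_I |P_N(\theta)|^{2k}\, d\theta\Bigr)^{\!2} = \int_{I\times I} \E\bigl[|P_N(\theta)|^{2k}|P_N(\phi)|^{2k}\bigr]\, d\theta\, d\phi.
\]
Expanding the $2k$-th powers and carrying out the $X$-expectation kills every term except those where the tuple $(m,n,m',n') \in [1,N]^{4k}$ satisfies the appropriate multiplicative-type identity $m_1\cdots m_k m_1'\cdots m_k' = n_1\cdots n_k n_1'\cdots n_k'$ (Steinhaus) or its square analogue (Rademacher), producing
\[
N^{2k}\,\E\bigl[|P_N(\theta)|^{2k}|P_N(\phi)|^{2k}\bigr] = \sum_{(m,n,m',n')} e(\sigma\theta + \sigma'\phi),
\]
with $\sigma = \sum_r m_r - \sum_s n_s$ and $\sigma' = \sum_r m_r' - \sum_s n_s'$; the total number of qualifying tuples is exactly $\E|\sum_{n \le N} X(n)|^{4k}$. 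Because the whole expression is real, each phase contributes the cosine of its argument; since $|\sigma|, |\sigma'| \le kN$, the hypothesis $k \le N^{1/4}$ forces $|\sigma\theta + \sigma'\phi| \le 2kN^{-1/2} \ll N^{-1/4}$ uniformly on $I \times I$, so every cosine is $\ge 1 - O(N^{-1/2})$, and summing yields
\[
N^{2k}\,\E(\mathfrak{m}_N^{(k)})^2 \;\ge\; (1 - o(1))\,N^{-3}\,\E\Bigl|\sum_{n\le N} X(n)\Bigr|^{4k}.
\]

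Combining the two estimates gives \eqref{vartransfer} up to a $(1 - o(1))$ loss on the first term of the right-hand side; for sufficiently large $N$ this can be absorbed by mildly enlarging $I$ to $[0, \beta N^{-3/2}]$ with $\beta = 1 + o(1)$, and the remaining small-$N$ cases are handled by compactness exactly as in the proof of \eqref{eq:moment_mean_larger}. The central technical point---and the source of the hypothesis $k \le N^{1/4}$---is the phase bound on $I \times I$: if $k$ were appreciably larger, the cosines would begin to cancel and the short-interval lower bound would break down.
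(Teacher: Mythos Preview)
Your argument is correct and follows essentially the same route as the paper. The paper packages the second-moment computation by introducing the auxiliary polynomial $F_k(\alpha,\alpha') = \sum_{\mathbf{n},\mathbf{n}'} X(n_1\cdots n_k n_1'\cdots n_k')\, e(\alpha\sum n_j + \alpha'\sum n_j')$, notes that $\int_{[0,1]^2}\E|F_k|^2$ equals the count $\sum^* = N^{2k}\E[(\mathfrak{m}_N^{(k)})^2]$, and then restricts to $[0,N^{-3/2}]^2$ using positivity of $\E|F_k|^2$; but since $F_k(\alpha,\alpha') = N^k P_N(\alpha)^k P_N(\alpha')^k$, this is exactly your $\int_{I\times I}\E[|P_N(\theta)|^{2k}|P_N(\phi)|^{2k}]$, and the subsequent phase bound is identical. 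Your handling of the $(1-o(1))$ loss is in fact more careful than the paper's: the paper simply writes $\gg$ in the proof while the lemma is stated with $\geq$, so both versions share this cosmetic gap, and your enlarging-$I$ fix resolves it for large $N$ (the compactness appeal for small $N$ is not quite apt here since there is no implicit constant to adjust, but this is a defect of the lemma's phrasing rather than of your argument).
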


\begin{proof}
We treat the case that $X(n)$ is a Steinhaus random multiplicative function first. Since $\E (\mathfrak{m}_N^{(k)}) \leq N^{-k}\  \E \left( |\sum_{n \leq N} X(n)|^{2k} \right) $, we first observe that

\begin{align}\label{Varlower}
\Var(\mathfrak{m}_N^{(k)}) &=\E[(\mathfrak{m}_N^{(k)})^2] - (\E\, \mathfrak{m}_N^{(k)})^2 \notag \\
&\geq   N^{-2k}\sideset{}{^*}\sum_{\substack{ {\bf m}, {\bf m'} \in [1,N]^k \\ {\bf n}, {\bf n'} \in [1,N]^k}} 1 - N^{-2k} \left( \E  \big|\sum_{n \leq N} X(n)\big|^{2k}  \right)^2
\end{align}
where the asterisked sum here is restricted to tuples ${\bf m},{ \bf m'}, {\bf n}, {\bf n'}$ satisfying,
\begin{align}\label{Vsys1}
\begin{split}
\sum_{j \leq k} m_j =  \sum_{j \leq k} n_j, &\qquad \sum_{j \leq k} m_j' =  \sum_{j \leq k} n_j' \\
m_1\cdots m_k m_1'\cdots m_k' &= n_1\cdots n_k n_1' \cdots n_k'.
\end{split}
\end{align}

To treat the asterisked sum on the right-hand side of \eqref{Varlower}, consider the random trigonometric polynomial
$$
F_k(\alpha,\alpha')= \sum_{ {\bf n}, {\bf n'} \in [1,N]^k} 
X(n_1 \cdots n_k n_1' \cdots n_k') \, e\Big(\alpha \sum_{j \leq k} n_j +\alpha' \sum_{j \leq k} n_j' \Big)
$$
and observe that 

\begin{align}\label{starsumlower}
\sideset{}{^*}\sum_{\substack{ {\bf m}, {\bf m'} \in [1,N]^k \\ {\bf n}, {\bf n'} \in [1,N]^k}} 1  
&= \int_{[0,1]^2} \E \, | F_k(\alpha,\alpha')|^2 \, d \alpha \, d \alpha' \notag \\
&\geq \int_{[0,N^{-3/2}]^2} \E \, | F_k(\alpha,\alpha')|^2 \, d \alpha \, d \alpha'.  
\end{align}

For any $|\alpha|,|\alpha'|\leq N^{-3/2}$ we gather that

\begin{align*}
\E \, |F_k(\alpha,\alpha')|^2 &=
\sideset{}{^\dagger}\sum_{\substack{ {\bf n}, {\bf n'}   \in [1,N]^k  \\  {\bf m}, {\bf m'} \in [1,N]^k }} 
e\Big(\alpha \big(\sum_{j \leq k} n_j - \sum_{j \leq k} m_j \big)+\alpha' \big(\sum_{j \leq k} n_j' - \sum_{j \leq k} m_j' \big) \Big)\\
& \geq \sideset{}{^\dagger}\sum_{\substack{ {\bf n}, {\bf n'}   \in [1,N]^k  \\  {\bf m}, {\bf m'} \in [1,N]^k }} \left( 1+O\Big( \frac{k}{N^{1/2} } \Big) \right)
\gg \sideset{}{^\dagger}\sum_{\substack{ {\bf n}, {\bf n'}   \in [1,N]^k  \\  {\bf m}, {\bf m'} \in [1,N]^k }} 1,
\end{align*}
where the outer sums range over ${\bf n}, {\bf n'}, {\bf m}, {\bf m'} \in [1,N]^k $ satisfying the multiplicative constraint in \eqref{Vsys1}. Inserting this information into \eqref{starsumlower} and then \eqref{Varlower}, we get that 
 
$$\sideset{}{^*}\sum_{\substack{ {\bf m}, {\bf m'} \in [1,N]^k \\ {\bf n}, {\bf n'} \in [1,N]^k}} 1  \gg N^{-3} \sideset{}{^\dagger}\sum_{\substack{ {\bf n}, {\bf n'}   \in [1,N]^k  \\  {\bf m}, {\bf m'} \in [1,N]^k }} 1=N^{-3} \ \E \Big( \Big|\sum_{n \leq N} X(n)\Big|^{4k} \Big).$$
Collecting all of the previous estimates, we find the lower bound \eqref{vartransfer}.

The proof for $X(n)$ a Rademacher random variable carries through in the same manner. The multiplicative constraint in \eqref{Vsys1} is replaced by the constraint 
$$
m_1\cdots m_k m_1'\cdots m_k' n_1\cdots n_k n_1' \cdots n_k' = \square.
$$
\end{proof}

\begin{proof}[Proof of \eqref{eq:moment_variance_larger} of Theorem \ref{thm:main_moments_larger}]
Lemma \ref{lem:var_lowerbound} shows that in the range $k \leq N^{1/4}$, for $X(n)$ either a Steinhaus or Rademacher random multiplicative function, we will have
\begin{equation}\label{eq:var_upper_const1}
\Var(\mathfrak{m}_N^{(k)}) \geq (\E\, \mathfrak{m}_N^{(k)})^2
\end{equation}
if
\begin{equation}\label{eq:moment_ratio}
\Big(\E\, \Big| \sum_{n\leq N} X(n)\Big|^{4k}\Big)\bigg/\Big(\E\,\Big| \sum_{n\leq N} X(n) \Big|^{2k}\Big)^2 \geq 2N^3.
\end{equation}
But for $k \leq \tfrac{c}{2}\log N / \log_2 N$ we may apply the estimates of Theorem \ref{thm:harper_highmoments} to the numerator and denominator of the left-hand side of \eqref{eq:moment_ratio}, and we see with a little computation that such an inequality is true as long as $k \geq B (\log N / \log_2 N)^{1/2}$ for a sufficiently large absolute constant $B$, in both the Steinhaus and Rademacher cases. 

This proves \eqref{eq:moment_ratio} for $B(\log N /\log_2 N)^{1/2} \leq k \leq \tfrac{c}{2} \log N / \log_2 N$. But note that if for $p \in (0,\infty)$ we define
$$
\phi(p) = \log\, \E\, \Big| \sum_{n \leq N} X(n)\Big|^{2p},
$$
then $\phi(p)$ is a convex function (see \cite[Thm. 5.5.1]{garling}). It follows that $\phi(2k) - 2 \phi(k)$ is an increasing function in $k$. Therefore, fixing $N$, if \eqref{eq:moment_ratio} is true for some value of $k$, it will remain true for all larger values of $k$. This implies \eqref{eq:var_upper_const1} for all $B(\log N /\log_2 N)^{1/2} \leq k \leq N^{1/4}$, completing the proof of Theorem \ref{thm:main_moments_larger}.
\end{proof}

\begin{rmk}
We have made no attempt to optimize the upper limit $N^{1/4}$ above. It may be that \eqref{eq:moment_variance_larger} of Theorem \ref{thm:main_moments_larger} holds for all $B(\log N /\log_2 N)^{1/2} \leq k < \infty$. 
\end{rmk}

\bibliographystyle{alpha}
\bibliography{references_RMF}

\Addresses

\end{document}